\theoremstyle{plain} \newtheorem{theorem}{Theorem}[section]
\theoremstyle{plain} \newtheorem{lemma}[theorem]{Lemma}
\theoremstyle{plain} \newtheorem{proposition}[theorem]{Proposition}
\newtheorem{corollary}[theorem]{Corollary}
\newtheorem{defi}[theorem]{Definition}
\newcommand{\nr}{\refstepcounter{theorem}  
                   \noindent {\thetheorem .}}
\newcommand{\rem}{\medskip \noindent {\it Remark \nr} }
\newcommand{\remfin}{\medskip}
\newcommand{\llabel}{\addtocounter{theorem}{-1}
\refstepcounter{theorem} \label}
\newcommand{\Hom}{\text{Hom}}
\newcommand{\Ext}{\text{Ext}}
\newcommand{\id}{\text{{id}}}
\newcommand{\im}{\text{im}\,}
\newcommand{\sus}{\subseteq}
\newcommand{\pil}{\rightarrow}
\newcommand{\lpil}{\longrightarrow}
\newcommand{\vpil}{\leftarrow}
\newcommand{\mto}[1]{\stackrel{#1}\longrightarrow}
\newcommand{\mlto}[1]{\stackrel{#1}\longleftarrow}
\newcommand{\vmto}[1]{\stackrel{#1}\longleftarrow}
\newcommand{\iso}{\cong}
\newcommand{\te}{\otimes}
\newcommand{\del}{\delta}
\newcommand{\hele}{{\bf Z}}
\newcommand{\CE}{C}
\newcommand{\CEp}{{C^\prime}}
\newcommand{\lieg}{\mathfrak{g}}
\newcommand{\fg}{\mathfrak{g}}
\newcommand{\fh}{\mathfrak{h}}
\newcommand{\gA}{\mathcal{A}}
\newcommand{\bx}{{\mathbf{x}}}
\newcommand{\by}{{\mathbf{y}}}
\newcommand{\dt}{{\displaystyle \cdot}}
\begin{document}
\title [Regular algebras of dimension five]
{Artin-Schelter regular algebras of dimension five}
\author { Gunnar Fl{\o}ystad$^1$ \and Jon Eivind Vatne$^2$}
\address{ 1) Matematisk institutt\\
          Johs. Brunsgt. 12 \\
          N-5008 Bergen \\
          Norway\\
2) Faculty of Engineering\\
P.O.Box 7030\\
N-5020 Bergen\\
Norway}   
        
\email{ gunnar@mi.uib.no \and  jev@hib.no }

\keywords{Artin-Schelter regular, enveloping algebra, graded Lie algebra, 
Hilbert series }
\subjclass[2000]{Primary: 16S38, 16E05; Secondary: 14A22}
\date{\today}

\begin{abstract}
We show that there are exactly three types of
Hilbert series of Artin-Schelter regular algebras of 
dimension five with two generators. One of these cases (the most extreme)
may not be realized by an enveloping algebra of a graded Lie algebra.
This is a new phenomenon compared to lower dimensions, where all resolution
types may be realized by such enveloping algebras.
\end{abstract}

\maketitle

\section*{Introduction}
Artin-Schelter (AS) regular algebras is a class of graded algebras
which may be thought of as homogeneous coordinate rings of 
non-commutative spaces. They were introduced by Artin and Schelter 
\cite{AS87}, who classified such algebras of dimension up to three
which are generated in degree one. Since then these algebras of dimension
three and four, and their module theory, have been intensively studied,
see \cite{ATV1}, \cite{ATV2}.  Ideal theory (see \cite{NS},
\cite{DeNM}, \cite{DeNvdB}) and deformations (see \cite{FV06}) have
also been studied in recent years.
This paper is concerned with basic questions
for AS-regular algebras of dimension five. (We shall always assume
our algebras to be generated in degree one.) 

Fundamental invariants of a connected graded algebra are its Hilbert series, 
and, more refined, the graded betti numbers in a resolution of the 
residue field $k$. Unlike the polynomial ring, AS-algebras might not
be defined by quadratic relations. For AS-algebras the
graded betti numbers may thus be distinct from that of the polynomial ring.
For instance in dimension three there are two types of resolutions
for the residue field \cite{AS87} :
\begin{eqnarray*}
A \vpil & A(-1)^3 \vpil A(-2)^3 & \vpil A(-3) \\
A \vpil & A(-1)^2 \vpil A(-3)^2 & \vpil A(-4). 
\end{eqnarray*}

\medskip
A general class of examples giving AS-algebras of arbitrarily large
dimension may be obtained as follows. Let $\fg = \oplus_{i=1}^n \fg_i$
be a finite dimensional graded Lie algebra, generated by $\fg_1$. Then the enveloping algebra $U(\fg)$ 
is AS-regular.
The two resolutions above may both be realized by such algebras. The first
by the polynomial ring, which is the enveloping algebra of the abelian Lie
algebra, while the second resolution occurs for the enveloping algebra of
the three-dimensional Heisenberg Lie algebra.

In \cite{lz2006} Lu, Palmieri, Wu, and Zhang investigated four-dimensional
AS-algebras and in particular established the possible types of resolutions
of the residue field when $A$ is a domain. All these types may be realized
by enveloping algebra of Lie algebras.
This naturally raises the following question.
\begin{itemize}
\item May all resolution types or at least Hilbert series types of AS-algebras
be realized by enveloping algebras of graded Lie algebras? 
\end{itemize}

We show that this is not so. Our first main result is the 
construction of a five-dimensional AS-algebra
with Hilbert series not among those of any enveloping Lie
algebra generated in degree one.

\medskip
When studying AS-algebras of dimension four much work has been concerned
with AS-algebras which are defined by quadratic relations. In particular
the Sklyanin algebra has been intensively studied \cite{Skl1}, \cite{SS}.
In the paper \cite{lz2006} the authors focus on the algebras which are the least
like polynomial rings. They have two generators, and one relation of degree
three and one of degree four. The authors obtain a classification of such
algebras under certain genericity assumptions.

Here we consider AS-algebra of dimension five with two generators, 
the ones at the opposite extreme compared to polynomial rings. The algebra
we exhibit in our first main result
is of this kind. Our second main result shows that there are three 
Hilbert series of such algebras under the natural condition that it is a 
domain. 

\medskip
It is intriguing that several natural questions concerning AS-algebra are not
known. 

\begin{itemize}
\item Artin and Schelter in \cite{AS87} conjecture that they are noetherian and 
domains. Levasseur \cite{Le} shows this in small dimensions.
\item Is the Hilbert series of 
an Artin-Schelter
regular algebra equal to the Hilbert series of a commutative
graded polynomial ring? I.e. is there a finite set of positive 
integers $n_i$ such that it is 
\[  \prod_i\frac{1}{(1-t^{n_i})}?\]
Polishchuk and Positselski \cite{PP2005} mention this as a conjecture in
Remark 3, p.135, and attribute it to Artin and Schelter. 
\item In \cite[Question 1.6]{lz2006} 
they ask whether the minimal number
of generators of an AS-regular algebra is always less than or equal to its 
global dimension.
\end{itemize}
For enveloping algebras of Lie algebras, this is true by the
Poincar\'{e}-Birkhoff-Witt theorem.  Our example of an AS-algebra that
has a Hilbert series different from Hilbert series of enveloping
algebras (generated in degree one) still has this property; its
Hilbert series equals the Hilbert series of a polynomial ring with
generators of degree $1,1,2,3,5$.

In the resolution of the residue field $k$ the last term will be $A(-l)$ for
some integer $l$ (in the two resolutions above for algebras of dimension three,
$l$ is $3$ and $4$). It is  natural to expect that the largest $l$ occurs
when the algebra is at the opposite extreme of the polynomial ring, when the 
algebra has two generators. 

\begin{itemize}
\item What is the largest $l$ such that $A(-l)$ is the last term in the resolution
of $k$, for a given dimension of $A$?
\end{itemize}

For polynomial rings of dimension $d$ then $l = d$. For enveloping algebras
of Lie algebras the largest $l$ that can occur for dimension $d$ is 
$1 + \binom{d}{2}$. However the algebra we construct has $l = 12$, while the 
maximum for enveloping algebras is $11$.

A last question of which we do not know a counterexample is the following.

\begin{itemize}
\item Are the global dimension and the Gelfand-Kirillov dimension 
of an AS-regular algebra equal?
\end{itemize}

\medskip
The organization of the paper is as follows. In Section 1 we recall the definition
of AS-regular algebras and recall some basic facts concerning their classification
and concerning enveloping algebras of graded Lie algebras.
In Section 2 we classify the  Hilbert series of enveloping algebras
of graded Lie algebras of dimension five.
In Section 3 we give an AS-regular algebra of dimension five whose Hilbert
series is not that of any enveloping algebra generated in degree one.
In Section 4 we classify the Hilbert series of AS-regular algebras of dimension
five with two generators.

\section{Preliminaries}
In this section we first recall the definition of AS-regular algebras.
Then we recall basic classification results concerning these in dimension 
three and four. Lastly we consider enveloping algebras of graded Lie algebras.

\begin{defi}[AS-algebras]\label{AS}
An algebra $A = k \oplus A_1 \oplus A_2 \oplus \cdots$ is called an 
Artin-Schelter regular of dimension $d$ if
\begin{itemize}
\item[(i)] $A$ has finite global dimension $d$.
\item[(ii)] $A$ has finite Gelfand-Kirillov-dimension (so the Hilbert
  function of $A$ is bounded by a polynomial).
\item[(iii)] $A$ is Gorenstein; we have
\[\Ext_A^i(k,A)=\begin{cases}0 & i\neq d\\k(l) & i=d\end{cases}\]
(here $k(l)$ is the module $k$ in degree $-l$).
\end{itemize}
\end{defi}

\noindent {\it Note.} We shall in this paper only be concerned with 
algebras generated in degree one.

\subsection{Algebras of dimension three and four}
From the original article of Artin and Schelter \cite{AS87} the classification
of these algebras in dimension $\leq 3$ is known, see also Artin, Tate
and Van den Bergh \cite{ATV1}. For two-dimensional algebras the only
possible type of resolution of the residue field is the same as that
of the polynomial ring
\begin{equation}
\label{PreLigDim2}
A \vpil A(-1)^2 \vpil A(-2).
\end{equation}
For three-dimensional algebras there are two resolution types:
\begin{eqnarray}
\label{PreLigDim31}
A & \vpil A(-1)^3 \vpil & A(-2)^3 \vpil A(-3) \\
\label{PreLigDim32}
A & \vpil A(-1)^2 \vpil & A(-3)^2 \vpil A(-4)
\end{eqnarray}

The Hilbert series of the algebras in (\ref{PreLigDim2}),(\ref{PreLigDim31}),and
(\ref{PreLigDim32}) are respectively:
\[ \frac{1}{(1-t)^2}, \quad \frac{1}{(1-t)^3}, \quad \frac{1}{(1-t)^2(1-t^2)}.
\]
In \cite{lz2006} Lu, Palmieri, Wu, and Zhang consider four-dimension algebras.
Under the natural hypothesis that $A$ is a domain they show that there are
three possible resolution types:
\begin{eqnarray}
\label{PreLigDim41}
A \vpil A(-1)^4  \vpil & A(-2)^6 & \vpil  A(-3)^4 \vpil A(-4) \\
\label{PreLigDim42}
A \vpil A(-1)^3  \vpil & A(-2)^2 \oplus A(-3)^2 & \vpil  A(-4)^3 \vpil A(-5) \\
\label{PreLigDim43}
A \vpil A(-1)^2  \vpil & A(-3) \oplus A(-4) & \vpil A(-6)^2 \vpil A(-7)
\end{eqnarray}
with Hilbert series, respectively:
\[ \frac{1}{(1-t)^4}, \quad \frac{1}{(1-t)^3(1-t^2)}, 
\quad \frac{1}{(1-t)^2(1-t^2)(1-t^3)}.
\]

The algebras (\ref{PreLigDim2}), (\ref{PreLigDim31}), and (\ref{PreLigDim41}) 
are Koszul.
The polynomial ring is the basic example for these types. The algebra 
(\ref{PreLigDim32})
is $3$-Koszul (see \cite{B02}). The algebras (\ref{PreLigDim42}) are
of a type studied in \cite{CS2007}.

\subsection{Enveloping algebras}
It is an interesting observation that all these types of resolutions may
be realised by enveloping algebras of graded Lie algebras. In fact such algebras
are always AS-regular. This is known to experts, but an explicit reference 
seems hard to come by. Since we consider this such an important class
of examples, we have included a proof of this fact in the next section. 

\medskip
For enveloping algebras the form of a minimal resolution may often easily 
be deduced from the Chevalley-Eilenberg complex which gives a resolution
by left modules of 
the residue field $k$ of $U = U(\fg)$. Explicitly the Chevalley-Eilenberg
resolution $C_\dt$ has terms $C_p = U \te_k \wedge^p \fg$ and
differential $d:C_p \pil C_{p-1}$ where the image of $u\te x_{1}\wedge \cdots
\wedge x_{p}$ is given by
\begin{eqnarray} 
& \sum_{l=1}^j(-1)^{l+1} & ux_l\te x_1\wedge \cdots
\wedge \hat{x_l}\wedge \cdots \wedge x_p \label{PreLigLCHE} \\ \notag
+ & \sum_{1\leq l<m\leq
  p}(-1)^{l+m} & u \te [x_l,\,x_m]\wedge x_1\wedge \cdots
\wedge\hat{x}_l\wedge \cdots \wedge\hat{x}_m\wedge\cdots \wedge x_p.
\end{eqnarray}

There is also a right Chevalley-Eilenberg resolution $C_\dt^\prime$ of $k$
by right modules. It has terms $C_p^\prime = \wedge^p \fg \te_k U$ and
differential $d:C_p^\prime \pil C_{p-1}^\prime$ where the image of 
$x_{1}\wedge \cdots
\wedge x_{p} \te_k U$ is given by
\begin{eqnarray} &  \sum_{l=1}^j(-1)^{p-l}  & x_1\wedge \cdots
\wedge \hat{x_l}\wedge \cdots \wedge x_p \te_k x_lu  \label{PreLigRCHE} \\
\notag
+ & \sum_{1\leq l<m\leq
  p} (-1)^{l+m} & x_1\wedge \cdots
\wedge\hat{x_l}\wedge \cdots \wedge\hat{x_m}\wedge\cdots \wedge x_p\wedge 
[x_l, x_m] 
\te_k u.
\end{eqnarray}

\subsection{Minimal resolutions of enveloping algebras}
If $X$ is a finite set, we let ${\mathcal Lie}(X)$ be the free Lie algebra
with generating set $X$, and $T(X)$ the free associative algebra (tensor algebra).
It is the enveloping algebra of the free Lie algebra on $X$.

Now if $\fh$ is an ideal in a Lie algebra $\fg$, then $U(\fg/\fh)$ is equal
to $U(\fg)/(\fh)$. Hence if $I$ is an ideal in ${\mathcal Lie}(X)$, the
enveloping algebra of ${\mathcal Lie}(X)/I$ is 
$T(X)/(I)$.

\medskip
The resolutions (\ref{PreLigDim2}), (\ref{PreLigDim31}), and (\ref{PreLigDim41})
are realised by the polynomial ring, the enveloping algebra of the abelian 
Lie algebra.
The resolution (\ref{PreLigDim32}) may be realised by the enveloping algebra
of the graded Heisenberg Lie algebra which is $\langle x, y \rangle \oplus
\langle [x,y] \rangle$. 
The resolution (\ref{PreLigDim42}) may be realised by the enveloping algebra
of 
\[ {\mathcal Lie}(x,y,z)/([z,x],[z,y],[x,[x,y]], [y,[x,y]]) \]
which as a graded Lie algebra may be written in terms of basis elements as
\[ \langle x,y,z \rangle \oplus \langle [x,y] \rangle. \]
 The resolution (\ref{PreLigDim43}) may be realised by the enveloping algebra of 
\[ {\mathcal Lie}(x,y)/([x[x[xy]]],[[xy]y]) \]
which as a graded Lie algebra may be written in terms of basis elements as
\[ \langle x,y \rangle \oplus \langle [x,y] \rangle
\oplus \langle [x,[x,y]] \rangle. \]

That the resolutions of these algebras are as stated, is easily worked out
by writing the Chevalley-Eilenberg complex and figuring out which adjacent free
terms may be cancelled to give a minimal resolution.

\medskip

For a graded Lie algebra $\fg = \oplus \fg_i$ let $h_\fg(i) = \dim_k \fg_i$
be its Hilbert function. 
It is an easy consequence of the Poincar\'{e}-Birkhoff-Witt theorem
that its Hilbert series is 
\begin{equation} \label{PreLigHilb} \prod_i\frac{1}{(1-t^i)^{h_\fg(i)}}.
\end{equation}
If $\fg$ is finite dimensional of small dimension,
we shall display the Hilbert function by the sequence  
$h_\fg(1), h_\fg(2), \ldots, h_\fg(n)$ where $n$ is the largest argument
for which the value of the Hilbert function is nonzero.

\section{Envelpoing algebras are Artin-Schelter regular}

This section contains a proof of the following.

\begin{theorem} \label{EnvTheoremAS}
Let $\lieg$
be a finite dimensional
positively graded Lie algebra.  Then the enveloping
algebra $U(\lieg)$ is an Artin-Schelter
regular algebra. Its global dimension and Gelfand-Kirillov dimension
are both equal to the vector space dimension of $\lieg$.
\end{theorem}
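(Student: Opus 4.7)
The plan is to extract all three defining properties of AS-regularity from the Chevalley--Eilenberg (CE) resolutions displayed in (\ref{PreLigLCHE}) and (\ref{PreLigRCHE}), together with the Poincar\'e--Birkhoff--Witt (PBW) theorem. Set $d = \dim_k \fg$. The left CE complex is a free resolution of the trivial left $U(\fg)$-module $k$ of length $d$, so $\mathrm{gldim}\,U(\fg) \le d$. Because $\fg$ is positively graded, every summand of (\ref{PreLigLCHE}) lands in $\mathfrak{m}\cdot C_\bullet$ (the factor $x_l$ in $ux_l$ and the bracket $[x_l,x_m]$ both lie in the augmentation ideal $\mathfrak{m}$), so the resolution is minimal; applying $k\otimes_{U(\fg)}-$ then yields $\mathrm{Tor}^{U(\fg)}_d(k,k) = \wedge^d \fg \ne 0$, and hence $\mathrm{gldim}\,U(\fg) = d$ exactly. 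By PBW, $U(\fg)$ has the Hilbert series (\ref{PreLigHilb}), a rational function whose denominator has $\sum_i h_\fg(i) = d$ factors, so its coefficient sequence grows polynomially of degree $d-1$ and the Gelfand--Kirillov dimension equals $d$ as well.

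The Gorenstein condition is the substantive step. Apply $\Hom_{U(\fg)}(-, U(\fg))$ to the left CE resolution. Since $C_p = U(\fg)\otimes_k \wedge^p\fg$ is free of finite rank as a left module, one obtains $\Hom_{U(\fg)}(C_p, U(\fg)) \cong \wedge^p \fg^* \otimes_k U(\fg)$ as a right $U(\fg)$-module. Composing with the Poincar\'e duality isomorphism $\wedge^p \fg^* \cong \wedge^{d-p}\fg \otimes_k \det(\fg^*)$ identifies the dualized cocomplex, up to reindexing $p\mapsto d-p$ and twisting by the one-dimensional module $\det(\fg^*)$, with the right CE complex (\ref{PreLigRCHE}). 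The latter is again an exact resolution of $k$, and $\det(\fg^*)$ is concentrated in internal degree $-l$ where $l = \sum_i i\cdot h_\fg(i)$. It follows that $\Ext^i_{U(\fg)}(k, U(\fg)) = 0$ for $i \ne d$ and $\Ext^d_{U(\fg)}(k, U(\fg)) \cong k(l)$, establishing the Gorenstein property with the expected shift.

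The main obstacle is precisely the identification of differentials in the preceding paragraph: one has to verify that the dual of the left CE differential (\ref{PreLigLCHE}), transported through the contraction isomorphism $\wedge^p \fg^* \to \wedge^{d-p}\fg \otimes \det(\fg^*)$, coincides with the right CE differential (\ref{PreLigRCHE}). The two summands in (\ref{PreLigLCHE}) (the multiplication-by-$x_l$ term and the bracket term) must be tracked separately through the contraction, and a careful bookkeeping of Koszul signs is required. The minimality established in the first paragraph and the matching Euler characteristics of the two CE complexes serve as useful sanity checks.
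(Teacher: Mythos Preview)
Your approach coincides with the paper's: both arguments deduce the global dimension from the Chevalley--Eilenberg resolution, the Gelfand--Kirillov dimension from PBW via (\ref{PreLigHilb}), and the Gorenstein property by identifying $\Hom_U(C_\bullet, U)$ (up to the $\wedge^d\fg$-twist) with the right Chevalley--Eilenberg resolution $C'_\bullet$. The one point worth sharpening is your final paragraph. You describe the matching of differentials as ``a careful bookkeeping of Koszul signs,'' but the paper's Proposition~\ref{EnvTheoremDual} shows that this matching produces a genuine obstruction term, not merely signs: when one compares the two sides of the pairing equation, a residual expression $\Delta(\fg)\wedge x_p$ appears, where
\[
\Delta(\fg)=\sum_{i<j}(-1)^{i+j}[x_i,x_j]\wedge x_1\wedge\cdots\wedge\widehat{x_i}\wedge\cdots\wedge\widehat{x_j}\wedge\cdots\wedge x_d \in \wedge^{d-1}\fg.
\]
For a general Lie algebra this form need not vanish, and then the dual of $C_\bullet$ is \emph{not} isomorphic to $C'_\bullet$; the positive grading enters precisely here, since then each bracket $[x_i,x_j]$ lies in the span of the remaining basis vectors and every summand of $\Delta(\fg)$ is zero. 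So the step is more than formal sign-tracking---it is exactly where the hypothesis on $\fg$ is consumed in the Gorenstein argument. (Your minimality argument for the exact value of the global dimension is slightly more self-contained than the paper's citation of \cite{Wei}, and is correct as stated.)
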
 

As said, this is known to experts, but we include a proof since
we consider this an important class of AS-algebras, 
and a reference is hard to come by.

\begin{proof} [Proof of Theorem \ref{EnvTheoremAS}]
There are three conditions for an algebra to be Artin-Schelter
regular. 
\begin{itemize}
\item[(i.)] The global dimension must be
finite.  For an enveloping algebra, the global dimension is equal to
the dimension of the Lie algebra, see Exercise 7.7.2 of \cite{Wei}.
\item[(ii.)] The
Gelfand-Kirillov dimension must be finite.  This follows by the 
Poincar\'e-Birkhoff-Witt theorem, see (\ref{PreLigHilb}).
\item[(iii.)] The algebra must have the Gorenstein property.
This follows by  
Proposition \ref{EnvTheoremDual} below.
\end{itemize}
\end{proof}

\rem One may patch together an argument for the above theorem from
sources in the litterature as follows. If $A$ is an Auslander regular 
algebra (see \cite{VO}, 3.2.4),
then an Ore extension $A[x;\sigma, \delta]$ is also Auslander regular,
\cite{VO}, 3.2.16.4.
But the enveloping algebra of a graded Lie algebra is an iterated Ore
extension, as we note below, and so is Auslander regular.
By \cite{VO}, p.127, an Auslander regular algebra is AS-regular.

To see that enveloping algebras of graded Lie algebras are iterated Ore
extensions, 
let $\fh \sus \fg$ be an ideal in a Lie algebra with one-dimensional 
quotient
$\fg/\fh$ and $x$ an element of $\fg$ generating this quotient.
Then the map $[x,-]: \fh \pil \fh$ is a derivation. It is easily seen that
a derivation on the Lie algebra extends to a derivation of the enveloping 
algebra. Then $U(\fg)$ becomes an Ore extension $U(\fh)[x;\sigma, \delta]$
with $\sigma = \id$ and $\delta = [x,-]$. 
\remfin

\subsection{Some basic facts on modules and their duals}
Before proving Proposition \ref{EnvTheoremDual} below, 
we recall some general facts about modules over
(non-commutative) rings. 
If $A$ and $B$ are left modules over a ring $R$, denote by
$\Hom_R(A,B)$ the group of left homomorphisms. If $A$ and $B$ are
$R$-bimodules, denote by $\Hom_{R-R}(A,B)$ the group of bimodule 
homomorphisms. For a left $R$-module $A$
we write the dual $A^* = \Hom_R(A,R)$, which is a right $R$-module.
If $A$ is a bimodule, this dual is also naturally a left $R$-module.

\begin{lemma} \label{MainLemGenR}
 Let $R$ be a ring, $B$ a left $R$-module and $A$ an $R$-bimodule.

a. $\Hom_R(B,A^*) \iso \Hom_R(A \te_R B, R).$

b. If $B$ is also an $R$-bimodule, there is a natural isomorphism of bimodule
homomorphisms
\[ \Hom_{R-R}(B,A^*) \iso \Hom_{R-R}(A \te_R B, R). \]

c. In particular if $B = A^*$ we get a natural pairing
 \[ A \te_R A^* \pil R. \]
\end{lemma}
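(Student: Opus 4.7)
The lemma is essentially the tensor-hom adjunction, with a little bookkeeping about which side of $R$ acts where. The plan is to exhibit the bijection explicitly in one direction, write down the inverse, and then check that when all sides carry bimodule structures these structures are preserved, so that (b) follows for free and (c) is just the specialization $B=A^{*}$ evaluated at $\id_{A^{*}}$.

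First, I will fix the module conventions. Since $A$ is an $R$-bimodule, the dual $A^{*}=\Hom_{R}(A,R)$ of left $R$-linear maps is naturally a left $R$-module via $(r\cdot f)(a)=f(ar)$ (using the right action on $A$), and a right $R$-module via $(f\cdot r)(a)=f(a)\,r$ (using the right action on $R$). The tensor product $A\otimes_{R}B$ balances the right action on $A$ against the left action on $B$, and inherits a left $R$-action from $A$. Under these conventions both $\Hom_{R}(B,A^{*})$ and $\Hom_{R}(A\otimes_{R}B,R)$ are abelian groups (groups of left $R$-linear maps).

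For part (a), given $\phi\in\Hom_{R}(B,A^{*})$, define
\[
\tilde{\phi}\colon A\otimes_{R}B\lpil R,\qquad \tilde{\phi}(a\otimes b)=\phi(b)(a).
\]
I would check (i) well-definedness on the tensor product: $\tilde{\phi}(ar\otimes b)=\phi(b)(ar)=(r\cdot \phi(b))(a)=\phi(rb)(a)=\tilde{\phi}(a\otimes rb)$, using the left $R$-linearity of $\phi$; (ii) left $R$-linearity: $\tilde{\phi}(ra\otimes b)=\phi(b)(ra)=r\phi(b)(a)$ because $\phi(b)\in A^{*}$ is left $R$-linear. In the opposite direction, given $\psi\colon A\otimes_{R}B\to R$, set $\phi_{\psi}(b)(a)=\psi(a\otimes b)$; the same manipulations show $\phi_{\psi}(b)\in A^{*}$ and $\phi_{\psi}\in\Hom_{R}(B,A^{*})$. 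The two constructions are visibly mutual inverses, giving the natural isomorphism.

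For part (b), assume $B$ is a bimodule. Both sides now have extra right $R$-structures: on $\Hom_{R}(B,A^{*})$ from the right action on $A^{*}$, and on $\Hom_{R}(A\otimes_{R}B,R)$ from the right action on $B$ (hence on $A\otimes_{R}B$) or, equivalently, from the right action on $R$. I would check that $\phi$ respects right multiplication, i.e.\ $\phi(br)=\phi(b)\cdot r$, if and only if $\tilde{\phi}((a\otimes b)r)=\tilde{\phi}(a\otimes b)\,r$, since both equalities unfold to $\phi(br)(a)=\phi(b)(a)\,r$. Combining with the left-module statement from (a), the bijection restricts to one between bimodule homomorphisms. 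Part (c) then follows immediately by setting $B=A^{*}$ in (a) and taking the image of $\id_{A^{*}}\in\Hom_{R}(A^{*},A^{*})$, which is the evaluation pairing $a\otimes f\mapsto f(a)$.

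The whole proof is mechanical; the only thing that requires actual care is keeping straight which $R$-action on which bimodule induces which action on $A^{*}$ and on $A\otimes_{R}B$, so that the purported maps really land in the stated Hom groups. I would therefore spend most of the written proof spelling out the module conventions once and for all, after which each verification is a one-line computation.
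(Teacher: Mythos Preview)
Your proof is correct and follows essentially the same approach as the paper: both identify (a) as the tensor--hom adjunction with explicit correspondence $a\otimes b\mapsto \phi(b)(a)$, reduce (b) to the single identity $\phi(br)(a)=\phi(b)(a)\,r$, and obtain (c) as the image of $\id_{A^{*}}$. You are simply more explicit about module conventions and well-definedness where the paper is terse.
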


\begin{proof}
a. This is just the standard adjunction between $\Hom_R(A,-)$ and $A \te_R -$
for a bimodule $A$.

b. Let $B \mto{\phi} A^*$ be a left $R$-module homomorphism. 
It corresponds to the pairing
$A \te_R B \pil R$ given by $a \te b \mapsto \phi(b)(a).$
That this pairing is a bi-module homomorphism means that
\[ \phi(br)(a) = \phi(b)(a) \cdot r. \]
That $\phi$ is a bi-module homomorphism means that $\phi(br) = \phi(b) \cdot r$
which again says the same as the equation above.

c. This natural pairing corresponds to the identity $A^* \pil A^*$.
\end{proof}

\begin{proposition}\label{MainProParing}
Let $R$ be a ring. Given a homomorphism $A \mto{\alpha} B$ of left modules
and $B^* \mto{\beta} A^*$ of right modules. Then $\beta$ is dual to $\alpha$ iff
the natural pairings
\[ A \te_R A^* \pil R, \quad B \te_R B^* \pil R\]
fulfill the following for $a$ in $A$ and $b^\prime$ in $B^*$.
\begin{equation*} 
\langle a, \beta(b^\prime) \rangle = \langle \alpha(a), b^\prime \rangle. 
\end{equation*}
\end{proposition}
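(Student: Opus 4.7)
The plan is to unwind the definition of the natural pairing $A \te_R A^* \pil R$ so that the claimed equality becomes literally the defining equation for $\beta$ to be dual to $\alpha$. By Lemma \ref{MainLemGenR}(c), the pairing $A \te_R A^* \pil R$ corresponds under the adjunction of part (a) (or (b)) to the identity map $\id : A^* \pil A^*$. Tracing through the bijection in part (a), which sends $\phi : B \pil A^*$ to the map $a \te b \mapsto \phi(b)(a)$, one sees that with $B = A^*$ and $\phi = \id$ the pairing is simply
\[
\langle a, a^{\prime} \rangle = a^{\prime}(a), \qquad a \in A,\ a^{\prime} \in A^*,
\]
and analogously $\langle b, b^{\prime} \rangle = b^{\prime}(b)$ on $B$.

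First I would record this formula as the concrete description of the natural pairings. Next I would recall that, by definition, $\beta : B^* \pil A^*$ is dual to $\alpha$ means $\beta(b^{\prime})(a) = b^{\prime}(\alpha(a))$ for all $a \in A$ and $b^{\prime} \in B^*$. Substituting the concrete description of the pairings into the proposed identity
\[
\langle a, \beta(b^{\prime}) \rangle = \langle \alpha(a), b^{\prime} \rangle
\]
converts it respectively into $\beta(b^{\prime})(a)$ on the left and $b^{\prime}(\alpha(a))$ on the right. The two conditions are therefore manifestly the same, and both implications of the iff follow simultaneously.

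There is essentially no obstacle here beyond being careful that the pairing really is the evaluation map; this is purely a matter of following the correspondence in Lemma \ref{MainLemGenR} with $\phi = \id_{A^*}$. The only mild subtlety is the bimodule bookkeeping in Lemma \ref{MainLemGenR}(b) needed to make sense of $\beta$ as a right $R$-module map, but since the left-module adjunction in (a) already determines the underlying set-theoretic pairing, the equivalence of the two formulations is unaffected.
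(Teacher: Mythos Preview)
Your proof is correct and follows essentially the same approach as the paper: you unwind the natural pairing to the evaluation map $\langle a, a^{\prime}\rangle = a^{\prime}(a)$, after which the stated identity becomes literally $\beta(b^{\prime})(a) = b^{\prime}(\alpha(a))$, i.e., $\beta(b^{\prime}) = b^{\prime}\circ\alpha$. The paper's proof is the same computation in slightly fewer words.
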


\begin{proof}
The left side of the equation above is equal to $\beta(b^\prime)(a)$
while the right side is equal to 
$b^\prime (\alpha(a)) = b^\prime \circ \alpha (a)$.
So the equation above says $\beta (b^\prime) = b^\prime \circ \alpha$, 
which means that $\beta$ is 
dual to $\alpha$.
\end{proof}

\subsection{Duality between the left and right Chevalley-Eilenberg complex}

For a finite-dimensional Lie algebra $\lieg$ with basis
$\{x_1,\,x_2,\cdots,\,x_{\dim_k \lieg}\}$, we consider the form
\[\Delta(\lieg)=\sum_{1\leq i<j\leq \dim_k
  \lieg}(-1)^{i+j}[x_i,x_j]\wedge x_1\wedge \cdots \wedge
\hat{x_i}\wedge \cdots \wedge \hat{x_j}\wedge \cdots \wedge
x_{\dim_k\lieg}.\]  
This form lies in $\wedge^{\dim_k \lieg -1} \lieg$ and is uniquely determined
up to a scalar, since it is easily seen that it is invariant under substitutions
$x_i \mapsto x_i + \alpha x_j$ with $j \neq i$.
Note that this form occurs in one of the parts of the initial 
differential of the 
Chevalley-Eilenberg complex.  
In particular the vanishing of this form is equivalent to the 
top Lie algebra homology $H_{\dim_k \lieg}(\lieg, k)$ 
(defined as $H_{\dim_k \lieg}(k \te_U C_\dt)$)
being nonzero, isomorphic to $k$.


\begin{proposition} \label{EnvTheoremDual}
If the form $\Delta(\lieg)$ vanishes, the dual of the left 
Chevalley-Eilenberg complex is isomorphic to the right Chevalley-Eilenberg 
complex. In particular this holds for positively graded Lie algebras.
\end{proposition}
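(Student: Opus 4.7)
The plan is to construct an explicit isomorphism of complexes $\phi : \Hom_U(C_\dt,U) \isopil C'_{n-\dt}$, where $n = \dim_k \lieg$, then handle the positively graded case as a corollary.

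First I will identify the terms. By Lemma \ref{MainLemGenR}, $C_p^* = \Hom_U(U \te_k \wedge^p \lieg, U) \iso (\wedge^p\lieg)^* \te_k U$ as right $U$-modules. Fix the top form $\omega = x_1 \wedge \cdots \wedge x_n \in \wedge^n \lieg$. The nondegenerate wedge pairing $\wedge^p\lieg \te \wedge^{n-p}\lieg \pil \wedge^n\lieg \iso k$ induced by $\omega$ yields an isomorphism $(\wedge^p \lieg)^* \iso \wedge^{n-p}\lieg$, sending the dual basis element $x_I^*$ of $x_I = x_{i_1}\wedge\cdots\wedge x_{i_p}$ to $\epsilon(I)\, x_{I^c}$ where $\epsilon(I) = \pm 1$ is determined by $x_I \wedge x_{I^c} = \epsilon(I)\omega$. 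Composing gives $\phi_p : C_p^* \isopil \wedge^{n-p}\lieg \te_k U = C'_{n-p}$.

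Next I will verify that $\phi$ intertwines the differentials, i.e.\ that the diagram
\[
\begin{CD}
C_{p-1}^* @>{\phi_{p-1}}>> C'_{n-p+1} \\
@V{d^*}VV @VV{d'}V \\
C_p^* @>{\phi_p}>> C'_{n-p}
\end{CD}
\]
commutes. By Proposition \ref{MainProParing}, this amounts to checking the identity
\[
\langle d(1 \te x_I),\, \phi_{p-1}^{-1}(x_J \te 1)\rangle = \langle 1 \te x_I,\, \phi_p^{-1}(d'(x_J \te 1))\rangle
\]
for basis elements, where the pairings come from the canonical one $C_q \te_U C_q^* \pil U$ of Lemma \ref{MainLemGenR}(c). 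Expanding via formulas (\ref{PreLigLCHE}) and (\ref{PreLigRCHE}) produces two types of contributions: \emph{multiplication terms} (involving $u x_l$ versus $x_l u$) and \emph{bracket terms}. The multiplication terms match after a careful but routine sign bookkeeping based on the signs $\epsilon(I)$, essentially because left and right multiplication in $U$ are adjoint under the canonical pairing. The bracket terms on the two sides will agree up to an extra contribution; this discrepancy, gathered over all indices, is precisely a multiple of $\Delta(\lieg)$, wedged with the appropriate complementary basis monomial. Thus when $\Delta(\lieg) = 0$ the remaining terms cancel and the differentials match. This is where I expect the real work to lie: writing out the signs carefully enough to identify the surviving bracket terms as exactly the expression defining $\Delta(\lieg)$.

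Finally, to conclude the statement for positively graded Lie algebras $\lieg = \oplus_{i\geq 1}\lieg_i$, choose a homogeneous basis $x_1,\ldots,x_n$. Write $[x_i,x_j] = \sum_k c_{ij}^k x_k$; by homogeneity $c_{ij}^k \neq 0$ forces $\deg x_k = \deg x_i + \deg x_j > \max(\deg x_i,\deg x_j)$, so in particular $c_{ij}^i = c_{ij}^j = 0$. Since $x_k$ already occurs in $x_1 \wedge \cdots \hat x_i \cdots \hat x_j \cdots \wedge x_n$ for any $k \notin\{i,j\}$, each summand of $\Delta(\lieg)$ is individually zero. Hence the hypothesis $\Delta(\lieg) = 0$ holds automatically, and the isomorphism just constructed gives the proposition.
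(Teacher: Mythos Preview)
Your plan is correct and follows essentially the same route as the paper: set up the wedge pairing $\wedge^p\lieg\otimes\wedge^{n-p}\lieg\to\wedge^n\lieg$ to identify $(C_p)^*$ with $C'_{n-p}$, invoke Proposition~\ref{MainProParing} to reduce to a pairing identity, and verify that the only obstruction to adjointness of $d$ and $d'$ is (a wedge of) $\Delta(\lieg)$, with the graded case handled by the degree argument you give. The paper organizes the verification as a three-case split according to how many basis vectors $x_I$ and $x_J$ share, and picks up a global sign $(-1)^{n-1}$ on $d'$ that you should expect to appear in your computation.
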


\begin{proof}
Note that the form vanishes if each term $[x_i,x_j]$ is contained
in the linear span of the other $x_k$. This is certainly true for
a positively graded Lie algebra, since the degree of the bracket
will be larger than both the degree of $x_i$ and of $x_j$.

Now let $n$ be  $\dim_k \lieg$. 
There is a natural perfect pairing
\[ \wedge^p \lieg \te_k \wedge^{n-p} \lieg \lpil \wedge^{n} \lieg \]
giving an isomorphism
\[ \wedge^p \lieg \pil \Hom_k(\wedge^{n-p} \lieg, \wedge^n \lieg).\]
We get induced a pairing
\[ (U \te_k \wedge^p \lieg) \te_U (\wedge^{n-p} \lieg \te_k U) \mto{\langle, 
\rangle} U \te_k \wedge^n \lieg \]
which by Lemma \ref{MainLemGenR}.b corresponds to the isomorphism of 
$U$-bimodules
\[ \wedge^{n-p} \lieg \te_k U \pil \Hom_U(U \te_k \wedge^p \lieg, U \te_k 
\wedge^n \lieg). 
\]
For the left Chevalley-Eilenberg complex $\CE_\cdot$ of (\ref{PreLigLCHE}), 
we shall show that $\Hom_U(\CE_\cdot, U \te_k \wedge^n \lieg)$ is isomorphic to 
the right Chevalley-Eilenberg complex $\CEp_\cdot$ of (\ref{PreLigRCHE}), 
but equipped with differential $(-1)^{n-1} d^\prime$. 
According to Proposition \ref{MainProParing}
we must then show that for $u \te \bx$ in $U \te_k \wedge^p \lieg$ and
$\by \te v$ in $\wedge^{n-p+1} \lieg \te_k U$ we have
\begin{equation} \label{MainLabHovpar} 
\langle d(u \te \bx), \by \te v \rangle =
(-1)^{n-1} \langle u \te \bx, d^\prime (\by \te v) \rangle. 
\end{equation}
We may assume that $\bx = x_1 \wedge \cdots \wedge x_p$ and
$\by = x_q \wedge \ldots \wedge x_n$ where $q \leq p$.
We divide into three cases.

\medskip
\noindent 1. $q \leq p-2$. Then $\bx$ and $\by$ have at least three overlapping
$x_i$'s, and we see that both expressions in (\ref{MainLabHovpar}) are zero.

\medskip
\noindent 2. $q = p-1$. Then $\bx$ and $\by$ have two overlapping $x_i$'s. 
The left side of (\ref{MainLabHovpar}) is then
\[ (-1)^{(2p-1)} uv \te_k [x_{p-1},x_p] \wedge x_1 \wedge \cdots \wedge x_{n-1} \]
and this is equal to the right side which is
\[ (-1)^{n-1}\cdot (-1)^{(2p-1)} uv \te_k x_1 \wedge \cdots \wedge x_{n-1} \wedge 
[x_{p-1}, x_p]. \]

\medskip
\noindent 3. $q = p$, so  $\bx$ and $\by$ have one overlapping $x_i$. 
The left side
of (\ref{MainLabHovpar}) is then
\begin{eqnarray*} (-1)^{p+1}& & ux_pv \te_k x_1 \wedge \cdots \wedge x_n \\
+ \sum_{i < p} (-1)^{i+p} & &  uv \te_k [x_i, x_p] \wedge x_1 \wedge \cdots 
\hat{x_i}
\cdots \wedge x_n, \end{eqnarray*}
while the right hand side is 
\begin{eqnarray*} (-1)^{n-1}& [& (-1)^{n-p} ux_pv \te_k x_1 \wedge \cdots 
\wedge x_n \\
+ \sum_{p < j} (-1)^{p+j}& &  uv \te_k x_1 \wedge \cdots \hat{x_j} \cdots 
\wedge x_n \wedge [x_p, x_j]].
\end{eqnarray*}
These two expressions are equal provided
\begin{eqnarray*} & & \sum_{i < p} (-1)^{i+p}  [x_i, x_p] 
\wedge x_1 \wedge \cdots \hat{x_i}
\cdots \wedge x_n \\
 - & &  \sum_{p < j} (-1)^{p+j} [x_p, x_j] \wedge x_1 \wedge 
\cdots \hat{x_j} 
\cdots \wedge x_n ]
\end{eqnarray*}
is zero. But this expression is just $(-1)^{n-p} \Delta(\lieg) \wedge x_p$
and hence is zero.
\end{proof}

\rem One may show that the form $\Delta(\lieg)$ vanishes for nilpotent
and semi-simple Lie algebras. In general it does however not vanish.
\remfin

\section{Hilbert series of enveloping algebras of dimension five}
In this section we give the Hilbert series of enveloping algebras
of five-dimensional graded Lie algebras. The classification of resolutions is
just slightly more refined. In all cases save one there is one resolution type
for each Hilbert series.


\begin{proposition}
The following are the Hilbert functions of graded Lie algebras of dimension five
which are generated in degree one:
\[ a)\,\, 5 \quad b)\,\, 4,1 \quad c)\,\, 3,2 \quad d)\,\, 3,1,1 \quad 
e)\,\, 2,1,2 \quad f)\,\, 2,1,1,1. \]
\end{proposition}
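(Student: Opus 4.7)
My plan is to prove the proposition in two stages: an upper-bound argument cutting the possibilities down to the six listed sequences, followed by the exhibition of an explicit graded Lie algebra realizing each.

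For the upper bound, I would write $a_i=\dim_k \fg_i$ and record three elementary constraints on the sequence. First, since the bracket on $\fg_1$ is antisymmetric, $\fg_2$ is a quotient of $\wedge^2\fg_1$, so $a_2\leq \binom{a_1}{2}$; for $i\geq 3$ the bracket factors through $\fg_1\otimes\fg_{i-1}$, giving $a_i\leq a_1a_{i-1}$. Second, generation in degree one means $\fg_i=[\fg_1,\fg_{i-1}]$ for $i\geq 2$, so once some $a_i=0$ with $i\geq 2$ every subsequent $a_j$ vanishes. Third, $a_1\geq 2$, since $a_1=1$ forces $\fg_2=[\fg_1,\fg_1]=0$ and hence $\dim\fg=1\neq 5$. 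A routine case split on $a_1\in\{2,3,4,5\}$ subject to $\sum a_i=5$ together with these inequalities then leaves exactly the six sequences (a)-(f).

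For realizability, I would exhibit for each Hilbert function a concrete Lie algebra, chosen to be the smallest nilpotent quotient of a free Lie algebra that has room for the prescribed dimensions. The abelian algebra $k^5$ handles (a); for (b) take $\langle w,x,y,z\rangle\oplus\langle[w,x]\rangle$ with every other nontrivial bracket set to zero; for (c) take $\langle x,y,z\rangle\oplus\langle[x,y],[x,z]\rangle$ with $[y,z]=0$ and all degree-$\geq 3$ brackets zero; for (d) take $\langle x,y,z\rangle\oplus\langle[x,y]\rangle\oplus\langle[x,[x,y]]\rangle$ with $[z,-]=0$ and $[y,[x,y]]=0$; for (e) take $\langle x,y\rangle\oplus\langle[x,y]\rangle\oplus\langle[x,[x,y]],[y,[x,y]]\rangle$ with every degree-$\geq 4$ bracket zero; and for (f) take $\langle x,y\rangle\oplus\langle[x,y]\rangle\oplus\langle[x,[x,y]]\rangle\oplus\langle[x,[x,[x,y]]]\rangle$ with $[y,[x,y]]=0$ and all higher brackets vanishing.

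The main obstacle is verifying that each of these prescriptions defines a Lie algebra of the asserted Hilbert function, which comes down to checking the Jacobi identity on triples of basis vectors and ruling out forced nonvanishing of brackets we have set to zero. Most Jacobi triples are trivial because one entry bracketed with anything is zero by prescription; the few nontrivial checks, such as $[x,[y,z]]+[y,[z,x]]+[z,[x,y]]=0$ for the three generators in (c) or $[x,[y,u]]+[y,[u,x]]+[u,[x,y]]=0$ with $u=[x,y]$ in (e), collapse by an immediate inspection of degrees. I expect no single verification to be subtle, but together they constitute the bulk of the argument.
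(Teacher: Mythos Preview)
Your proposal is correct and follows essentially the same two-step strategy as the paper: exhibit explicit realizations for each of the six sequences, and argue that no other sequences are possible. The paper compresses your upper-bound argument to the single sentence ``trivial to verify,'' and presents each example as an explicit quotient of a free Lie algebra $\mathcal{L}ie(X)$ by a named ideal; since Jacobi is inherited by quotients, this packaging replaces your hand-verification of Jacobi by the (equally routine) check that the quotient has the claimed graded dimensions.
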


\begin{proof}
Denote by ${\mathcal L}_n$ the degree $n$ piece of a
free Lie algebra ${\mathcal Lie}(X)$. The cases above may then
be realised as follows.

\vskip 5mm
{
\begin{tabular}{|l|l|l|}
\hline Case & Lie algebra & Basis 
\\\hline 
a) &  Abelian Lie algebra    &  $x, y, z, w, t$  
\\\hline
b) & ${\mathcal Lie}(x,y,z,w)/(([z,-],[w,-])+ {\mathcal L}_3)$ & 
$x,y,z,w,[x,y]$ 
\\ \hline
c) & ${\mathcal Lie}(x,y,z)/(([y,z])+ {\mathcal L}_3)$ & 
$x,y,z,[x,y],[x,z]$
\\\hline 
d) & ${\mathcal Lie}(x,y,z)/(([z,-],[x,[x,y]])+{\mathcal L}_4)$
& $x,y,z,[x,y],[[x,y],y]$
\\ \hline 
e) & ${\mathcal Lie}(x,y)/ ({\mathcal L}_4)$ & $x,y,[x,y],[x,[x,y]],[[x,y],y]$
\\ \hline
f) & ${\mathcal Lie}(x,y)/(([x,[x,y]])+{\mathcal L}_5)$ & 
$x,y,[x,y],[[x,y],y],[[[x,y],y],y]$
\\\hline
\end{tabular}}

\vskip 5mm
That there are no more possible Hilbert functions for Lie algebras 
generated in degree one, is trivial to verify.
\end{proof}



\rem \label{differentbetti}
In all these cases save one there is only one possible resolution type 
of enveloping algebras associated to graded Lie algebras with this
Hilbert function. The exception is for the Hilbert function $4,1$.
Consider the Lie
algebra 
\[\mathfrak{g}={\mathcal Lie}(x,y,z,w)/(([z,-],[w,-])+ {\mathcal L}_3)\]
from the proof of case b) above, and
\[\mathfrak{h}={\mathcal
  Lie}(x,y,z,w)/([x,y]-[z,w],[x,z],[x,w],[y,z],[y,w])\]
It is clear that $\mathfrak{g}$ has two (necessary) relations in
degree three, killing $[x,[x,y]]$ and $[[x,y],y]$.  For $\mathfrak{h}$
all cubic relations are consequences of the quadratic relations.  For
instance,
\[[x,[x,y]]=[x,[z,w]]=[[w,x],z]+[[x,z],w]=0.\]
With
$U$ the enveloping algebra of $\fg$  and $V$ the enveloping algebra of $\fh$, 
the minimal resolutions of $k$ are respectively
\begin{eqnarray*}
U\leftarrow U(-1)^4\leftarrow & U(-2)^5\oplus U(-3)^2  \\
 \leftarrow & U(-3)^2\oplus U(-4)^5 & \leftarrow U(-5)^4 \leftarrow U(-6) \\
V\leftarrow V(-1)^4\leftarrow & V(-2)^5\leftarrow
V(-4)^5 & \leftarrow V(-5)^4\hookleftarrow V(-6).
\end{eqnarray*}
Thus the two Artin-Schelter regular algebras $U$ and $V$ have the same
Hilbert series, but different Betti numbers.\\

This result can also be obtained easily by considering the rank of the
relevant differential in the Chevalley-Eilenberg resolution (the part
from $U(-3)^4\leftarrow U(-3)^4$).  In our examples, this map has
rank two or four.  The eager reader is encouraged to check that there
are also examples with rank three.
\remfin

\rem We shall be concerned with the classification of Hilbert series of algebras
of dimension five generated by two variables. This occurs in cases e) and f) 
As mentioned above there is only one type of resolution of enveloping algebras
for each Hilbert function. They are in case e) and f) respectively
\begin{eqnarray*}
A \vpil A(-1)^2 \vpil & A(-4)^3 \vpil A(-6)^3 & \vpil  A(-9)^2 \vpil A(-10) \\
A \vpil A(-1)^2 \vpil & A(-3) \oplus A(-5)^2 & \\
\vpil & A(-6)^2 \oplus A(-8)&  \vpil A(-10)^2 \vpil A(-11). 
\end{eqnarray*}
\remfin

In the introduction we asked the question of how large $l$ could be 
in the last term $A(-l)$ in the minimal resolution of $k$, 
for a given global dimension. 
By the remark above we see that $l = 11$ may occur
for global dimension five. For enveloping algebras this is the largest $l$
as the following shows.

\begin{proposition}
For an enveloping algebra of a graded Lie algebra of dimension $d$,
the highest possible twist $l$ of the last term in a minimal resolution of $k$
is $1+\binom{d}{2}$.
\end{proposition}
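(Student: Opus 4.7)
The plan is to translate the problem into a combinatorial optimization on Hilbert functions of graded Lie algebras, and then solve it.

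First, I would express $l$ in terms of the Hilbert function of $\fg$. For any homogeneous basis $x_1,\ldots,x_d$ of $\fg$ with $\deg x_i = d_i$, the top term of the Chevalley--Eilenberg complex is $C_d = U\otimes_k \wedge^d \fg \iso U(-L)$ with
\[ L \;=\; \sum_{i=1}^d d_i \;=\; \sum_{j\ge 1} j\cdot h_\fg(j). \]
By Proposition \ref{EnvTheoremDual} combined with the Gorenstein condition, $\Ext^d_U(k,U)\iso k(L)$, and since the Gorenstein twist equals the twist of the last term of any minimal resolution of $k$ over an AS-regular algebra, we have $l = \sum_j j\cdot h_\fg(j)$. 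The point where I anticipate the most care is precisely here: the Chevalley--Eilenberg complex is not in general minimal (the bracket portion of the differential contributes constant matrix entries, so intermediate summands may cancel), so one cannot just read off the minimal resolution. But both the CE complex and any minimal resolution compute the same $\Ext$, which by Proposition \ref{EnvTheoremDual} is $k(L)$, and this forces the last term of the minimal resolution to be $U(-L)$.

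Next, I would carry out the optimization. Let $a_j := h_\fg(j)$ and let $m$ be the largest degree with $a_m\ne 0$. Since $\fg$ is generated in degree one, $\fg_{j+1}=[\fg_1,\fg_j]$, so $a_j=0$ would force $a_{j'}=0$ for all $j'\ge j$; hence $a_j\ge 1$ for $1\le j\le m$. If $d\ge 2$ then $a_1\ge 2$, for otherwise $\fg_1$ is one-dimensional and brackets to zero, making $\fg$ abelian of dimension one. Combined with $\sum_j a_j = d$, these constraints give $m\le d-1$. Setting $b_1 = a_1-2$ and $b_j = a_j-1$ for $2\le j\le m$, so $b_j\ge 0$ and $\sum_j b_j = d-m-1$, one computes
\[ l \;=\; \sum_{j=1}^m j a_j \;=\; 1 + \binom{m+1}{2} + b_1 + \sum_{j=2}^m j\,b_j \;\le\; 1+\binom{m+1}{2}+m(d-m-1), \]
since every $b_j$ has coefficient at most $m$. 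The right-hand side has successive differences $d-m-1\ge 0$ on $\{1,\ldots,d-1\}$ and is therefore maximized at $m=d-1$, yielding the upper bound $l \le 1+\binom{d}{2}$.

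Finally, the bound is attained by the filiform Lie algebra with basis $x,y,z_2,\ldots,z_{d-1}$ of degrees $1,1,2,\ldots,d-1$, brackets $[x,y]=z_2$, $[x,z_j]=z_{j+1}$ for $2\le j\le d-2$, $[x,z_{d-1}]=0$, and all other brackets zero. Jacobi is immediate because $y$ and the $z_j$ pairwise commute, and the sum of the basis degrees is exactly $1+\binom{d}{2}$.
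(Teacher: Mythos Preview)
Your proof is correct and follows essentially the same approach as the paper: identify $l$ with $\sum_j j\,h_\fg(j)$, optimize under the constraints $h_\fg(1)\ge 2$ and $h_\fg(j)\ge 1$ for $1\le j\le m$, and exhibit the filiform example. Your treatment is in fact slightly more careful than the paper's in one respect: you justify $l=\sum_j j\,h_\fg(j)$ via the Gorenstein twist and Proposition~\ref{EnvTheoremDual}, rather than simply reading it off the Chevalley--Eilenberg complex (which is not minimal in general), and your optimization is written out more explicitly.
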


\begin{proof}
The highest possible twist is the degree of $\wedge^{\dim \lieg} \lieg$
as we see from the Chevalley-Eilenberg complex.
If $h_\lieg$ is the Hilbert function of $\lieg$, then this is 
$\sum i \cdot h_\lieg(i)$. Since $\lieg$ is generated in degree one, clearly 
$h_\lieg (1) \geq 2$ and if  $h_\lieg (i) = 0$ for some $i$, it is
zero for every  successive argument. This gives that 
\[ m \leq 1\cdot 2 + 2 \cdot 1 + 3 \cdot 1 +\cdots +(d-1) \cdot 1 = 1 + \binom{d}{2}.\]
On the other hand there does actually exist an (infinite dimensional)
graded Lie algebra with hilbert function values
\[ 2, 1, 1, \cdots, 1, \cdots . \]
It is the quotient of ${\mathcal Lie}(x,y)$ by the bigraded ideal
generated by all Lie monomials of bidegree $(a,b)$ with $a \geq
2$. The quotient Lie algebra $\hat{\lieg}$  has a standard basis consisting
of $y$ and the Lie monomials $L_{i}$ of bidegree $(1,i-1)$ for $i \geq
1$ defined inductively by $L_1 = x$ and $L_i = [L_{i-1},y]$. 

The quotient of $\hat{\lieg}$ by $L_d$ will then be a finite
dimensional Lie algebra with $l = 1 + \binom{d}{2}$.
\end{proof}

We shall see in the next section that for global dimension five, $l=11$ is not
the largest twist. In fact we exhibit an algebra where $l=12$.

\section{An extremal algebra of dimension five}

We now give our first main result, namely an AS-regular algebra of dimension five
which has a Hilbert series not occurring for enveloping algebras generated in 
degree one. This shows that the numerical classes of AS-regular algebras
generated in degree one extends beyond that of enveloping algebras.

\begin{defi}
Let $\gA$ be the quotient algebra of the tensor algebra $k \langle x,y \rangle $
by the ideal generated by the commutator relations
\begin{equation} \label{EksLigRel} 
[x^2,y], \quad [x,y^3], \quad [x,yRy],
\end{equation}
where $R$ is $yxyx + xy^2x + xyxy$.
\end{defi}

\begin{theorem} \label{EksTheA}
The algebra $\gA$ is AS-regular. Its resolution is
\begin{eqnarray*} 
 \gA \vmto{d_1} \gA(-1)^2 \vmto{d_2} & \gA(-3) \oplus \gA(-4) \oplus \gA(-7) & \\
 \vmto{d_3} & \gA(-5) \oplus \gA(-8) \oplus \gA(-9) & 
\vmto{d_4} \gA(-11)^2 \vmto{d_5} \gA(-12) 
\end{eqnarray*}
where the differentials are
 \begin{eqnarray*} & d_1 = &  \begin{bmatrix} x & y \end{bmatrix} \\
& d_2 = &  \begin{bmatrix} xy & y^3 & yRy \\
                        -x^2 & -y^2x & -Ryx
         \end{bmatrix} \\
& d_3 = & \begin{bmatrix} y^2 & Ry & 0 \\
                         -x & 0 & -yR \\
                         0 & -x & y^2 
         \end{bmatrix}  \\
& d_4 = & \begin{bmatrix} -yRy & xyR & \\
                          y^3 & -xy^2 \\
                          yx & -x^2 
          \end{bmatrix} \\
& d_5 = & \begin{bmatrix} x \\       
                         y 
         \end{bmatrix}.
\end{eqnarray*}

The Hilbert series of $\gA$ is
\[ \frac{1}{(1-t)^2(1-t^2)(1-t^3)(1-t^5)}. \]
\end{theorem}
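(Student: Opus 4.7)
The plan is to verify the three Artin-Schelter conditions simultaneously by proving that the displayed sequence is the minimal free resolution of $k$ as a left $\gA$-module. The key preliminary observation is that the first two relations $[x^2,y]=0$ and $[x,y^3]=0$ force $x^2$ and $y^3$ into the center of $\gA$: each commutes with both generators, hence with everything. This centrality will be used throughout.

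First I would verify that the displayed sequence is a complex, i.e.\ $d_i d_{i+1} = 0$ for each $i$. The product $d_1 d_2$ recovers the three defining relations on the nose, column by column. For $d_2 d_3$ and $d_3 d_4$, column-by-column expansion yields expressions of the form $[x^2,y^m]$, $[x^2,Ry]$, $[y^3,R]$, $[x,yRy]$, all of which vanish by the defining relations together with centrality of $x^2$ and $y^3$; one column additionally requires the identity $y^2xyR = Ryxy^2$ in $\gA$, which is obtained by multiplying the third relation appropriately and invoking centrality of $y^3$. The composition $d_4 d_5 = 0$ is handled similarly, or more cheaply by appealing to self-duality (next step).

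Next, observe that the complex is self-dual: under the identification $\gA(-a)\leftrightarrow\gA(a-12)$, the matrix $d_{6-i}$ is the transpose of $d_i$ up to signs (compare $d_1$ with $d_5$, $d_2$ with $d_4$, and $d_3$ with itself). Therefore $\Hom_\gA(-,\gA)$ applied to the complex reproduces it with overall twist $12$. Once exactness is established, this immediately yields the Gorenstein condition $\Ext^5_\gA(k,\gA) = k(12)$, so $l=12$.

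The crux is then to establish $H(\gA,t) = \frac{1}{(1-t)^2(1-t^2)(1-t^3)(1-t^5)}$ independently. The approach I would take is a Gr\"obner basis computation under degree-lex with $y>x$: the three defining relations have leading monomials $yx^2$, $y^3x$, and $y^2xyxyx$, and one verifies that all $S$-polynomials among these reduce to zero, giving a PBW normal-form basis; counting normal words yields the stated series. The alternating sum of twists in the complex equals $1-2t+t^3+t^4-t^5+t^7-t^8-t^9+2t^{11}-t^{12} = (1-t)^2(1-t^2)(1-t^3)(1-t^5)$, so the formal Euler characteristic of the complex, multiplied by $H(\gA,t)$, equals $H(k,t)=1$. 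Since all differentials lie in the augmentation ideal, a Nakayama-type count (total rank $1+2+3+3+2+1 = 12$ matching the total Betti number demanded by $H(\gA,t)^{-1}$) forces exactness of the complex. Finite global dimension $5$ and finite Gelfand-Kirillov dimension follow at once.

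The main obstacle will be the independent Hilbert series computation. The high-degree leading monomial $y^2xyxyx$ generates S-polynomial overlaps in degrees up to roughly $10$ that must be reduced to zero against the relations and their consequences—a finite but laborious task. A structurally cleaner alternative would be to exhibit an explicit degree-$5$ central element $c_5$, so that $\gA$ becomes a free module over $k[x^2,y^3,c_5]$ with a finite basis, thereby reducing the Hilbert series computation to a bounded-degree verification; producing and justifying such a central element is itself a nontrivial step one would have to do by hand.
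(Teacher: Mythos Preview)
Your outline has the right architecture, but two of its load-bearing claims fail.

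\textbf{The Gr\"obner basis is not complete.} The three defining relations do \emph{not} form a Gr\"obner basis under degree-lex with $y>x$. The overlap $y^{3}xyxyx$ between $W_2=y^3x$ and $W_3=y^{2}xyxyx$ does not resolve; reducing it forces a fourth leading monomial $y^{2}xy^{2}xyx$ in degree~$8$. You can see the discrepancy numerically: words avoiding only $yx^{2}$, $y^{3}x$, $y^{2}xyxyx$ factor as $x^{a}\,M\,y^{c}$ with $M$ a word in $A'=yx$, $B'=y^{2}x$ avoiding the pattern $B'A'A'$, and in total degree~$8$ this yields $49$ monomials, whereas the target series has $h_{8}=48$. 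The paper runs into the identical phenomenon (under a mirror ordering), isolates the single non-resolving ambiguity, adjoins the extra reduction, and only then reads off the normal form $y^{n_y}B^{n_B}C^{n_C}A^{n_A}x^{n_x}$ giving the Hilbert series. Without the fourth element you do not get the Hilbert series, so your assertion that ``all $S$-polynomials reduce to zero'' is simply false.

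\textbf{Euler characteristic does not give exactness.} Matching the alternating sum of twists to $H(\gA,t)^{-1}$, together with ``total rank $=12=\sum_j|p_j|$'', shows only that \emph{if} the complex resolves $k$ then it is the minimal resolution. It does not prove exactness: a complex with the correct graded Euler characteristic and minimal differentials can carry homology of equal Hilbert series at two adjacent spots. There is no Nakayama-type argument that closes this gap. The paper uses the diamond-lemma normal form in an essential way here: centrality of $x^2$ and $y^3$ plus the explicit normal form show that $d_5$ is injective, and then direct normal-form manipulations verify $\ker d_4=\im d_5$ and $\ker d_3=\im d_4$; only the remaining spot is deduced from the Euler characteristic. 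That hand verification is the heart of the proof and cannot be bypassed.

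Your self-duality observation is correct and is indeed how one reads off the Gorenstein condition once exactness is in hand.
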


\rem \llabel{EksRemBig}
The algebra is bigraded.
If we list the bidegrees of the generators the resolution takes the following form.
\[ \underset{\scriptsize{\begin{matrix} (0,0) \end{matrix}}} {\gA} \vpil
\underset{\scriptsize{\begin{matrix} (1,0) \\ (0,1) \end{matrix}}} {\gA^2} \vpil 
\underset{\scriptsize{\begin{matrix} (2,1) \\ (1,3) \\ (3,4)\end{matrix}}} {\gA^3} \vpil
\underset{\scriptsize{\begin{matrix} (2,3) \\ (4,4) \\ (3,6)\end{matrix}}} {\gA^3} \vpil
\underset{\scriptsize{\begin{matrix} (4,7) \\ (5,6) \end{matrix}}} {\gA^2} \vpil 
\underset{\scriptsize{\begin{matrix} (5,7) \end{matrix}}} {\gA}. \]
The two-variable Hilbert series of the algebra is
\[ \frac{1}{(1-t)(1-u)(1-tu)(1-tu^2)(1-t^2u^3)}.\]

\remfin

\rem 
The algebra may be deformed by letting the third relation be
\begin{equation} 
\label{EksiLigDef} [x,yRy] + t [x,y^2x^2y^2]. \end{equation}
This will again give an AS-regular algebra with the same resolution type.
In addition we may deform the commutator relations
\begin{equation*} [x^2,y] \rightsquigarrow x^2y - p yx^2, \quad
                   [x,y^3] \rightsquigarrow xy^3 - q y^3x 
\end{equation*}
which must then be accompanied by a suitable deformation of the relation
(\ref{EksiLigDef}) above.
According to our computations these deformations give all algebras giving
a bigraded resolution of the form in Remark \ref{EksRemBig}.
\remfin

To prove the form of the Hilbert series and that the complex above gives a
resolution of
$\gA$, we invoke Bergman's diamond lemma \cite{Berg}.\\

{\noindent \bf Diamond lemma}, specialized to two variables.  $S$ is a set of
pairs $\sigma=(W_\sigma,\,f_\sigma)$, where $W_\sigma$ is a monomial
and $f_\sigma$ a polynomial in $k \langle x,y \rangle$.  
A {\em reduction} based on
$S$ consists of exchanging the monomial $W_\sigma$ with the 
polynomial $f_\sigma$.  If we have two pairs $\sigma,\,\tau$ such that
$W_\sigma=AB$ and $W_\tau=BC$, there is a choice of reducing the
monomial $ABC$ starting with $\sigma$ or $\tau$. This is called an
overlap ambiguity.  The similar case of inclusion ambiguity will not
play any role for the application we have in mind.  The ambiguity is
resolvable if there are further reductions of the results of these two
choices, giving a common answer.  The elements of $k\langle x,y \rangle $ 
that cannot
be reduced by $S$ are called irreducible.  An element is called
uniquely reducible if it can be reduced, in a finite number of steps,
to an irreducible element, and this irreducible element is unique.  
We also need a partial
order on monomials such that i) $B<B'$ implies $ABC<AB'C$ for all
monomials $A,C$, ii) any monomial appearing with a nonzero
coefficient in $f_\sigma$ is $<W_\sigma$, and iii) the ordering
has the descending chain condition. 

\begin{theorem}[Bergman's diamond lemma]
Under the assumptions above, the following are equivalent:
\begin{itemize}
\item[a)] All ambiguities of $S$ are resolvable.
\item[b)] All elements of $k\langle x,y \rangle $ are uniquely reducible under $S$.
\item[c)] The irreducible elements form a set of representatives for
  $k \langle x,y \rangle/(W_\sigma-f_\sigma)_{\sigma\in S}$.
\end{itemize}
Under these conditions, products in $\mathcal{A}$ can be formed by
multiplying the corresponding irreducible representatives, and then
reducing the answer. 
\end{theorem}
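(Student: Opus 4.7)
The plan is to derive Bergman's diamond lemma as the algebraic incarnation of Newman's abstract rewriting lemma: in a terminating rewriting system, local confluence (here, resolvability of ambiguities) implies global confluence, hence uniqueness of normal forms. Of the three equivalent conditions, (b) $\Rightarrow$ (a) is immediate (any ambiguity produces two single-step reductions that by unique reducibility must land on a common irreducible), and (b) $\Leftrightarrow$ (c) is essentially a formal reformulation, so the substantive direction is (a) $\Rightarrow$ (b).

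First I would establish termination. Axioms (ii) and (iii) together ensure that each reduction replaces a monomial $W_\sigma$ with a linear combination of strictly smaller monomials. Extending the partial order to the multiset order on polynomials (a polynomial being viewed as the finite multiset of monomials appearing with nonzero coefficient), any single reduction strictly decreases this multiset, and the multiset extension of a well-founded order is well-founded. Hence every reduction sequence terminates and every element admits at least one irreducible form.

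Next, local confluence. Given two reductions $r_\sigma$ and $r_\tau$ applicable to a polynomial $p$, their positions either lie on distinct monomials of $p$, or on the same monomial at positions that are disjoint, overlapping, or nested. The first two cases commute trivially because reductions are defined position-by-position and act on the remainder of the polynomial (resp.\ monomial) as the identity. The overlapping case is exactly the overlap ambiguity of $\sigma$ and $\tau$, and the nested case is the inclusion ambiguity; both are resolvable by hypothesis, yielding a common further-reduct of the two sides. Newman's lemma then gives global confluence and unique normal forms, establishing (b).

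For (b) $\Rightarrow$ (c), the normalization map $p \mapsto \bar p$ is $k$-linear by uniqueness and sends each generator $W_\sigma - f_\sigma$ of the relation ideal $I$ to zero (since both $W_\sigma$ and $f_\sigma$ normalize to the same irreducible), so it descends to a $k$-linear retraction $k\langle x,y\rangle / I \to \{\text{irreducibles}\}$; the inclusion of irreducibles into $k\langle x,y\rangle / I$ is an inverse, giving a bijection and hence (c). The main obstacle will be the careful lifting of the local-confluence analysis from monomials to polynomials: a reduction applied to a polynomial can introduce fresh monomials that were not present before and that may themselves be reducible, so one has to verify that the combination of Newman's lemma with the multiset order — rather than with simple monomial comparison — really does package all the subtleties of interacting polynomial reductions into the purely combinatorial condition (a) on ambiguities.
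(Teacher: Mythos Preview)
The paper does not prove Bergman's diamond lemma; it merely states the result, cites Bergman's original article \cite{Berg}, and then uses it as a black box to compute the Hilbert series and verify the resolution of the algebra $\mathcal{A}$ in Theorem~\ref{EksTheA}. There is therefore no proof in the paper against which to compare your proposal.

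Your sketch is nonetheless the standard modern route to the diamond lemma via abstract rewriting: establish termination through the multiset extension of the given monomial order, reduce local confluence on polynomials to the resolvability of overlap and inclusion ambiguities on monomials, and invoke Newman's lemma for global confluence. This is correct in outline. The point you flag at the end---lifting confluence from single monomials to arbitrary polynomials, where cancellation between terms can occur---is indeed where the genuine care is needed; Bergman's original argument handles this somewhat differently, working with reductions modulo the $k$-span of monomials strictly below a given one rather than by a direct appeal to Newman, but both approaches go through. Your equivalence (b)~$\Leftrightarrow$~(c) is also fine once (b) is in hand, since a sum of irreducible monomials is again irreducible and confluence forces the normal-form map to be $k$-linear.
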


Using this theorem, we will say that an element of $\mathcal{A}$ is
written in {\em standard form} if it is written as an irreducible
element in $k \langle x,y \rangle$.

\begin{proof}[Proof of Theorem \ref{EksTheA}.] 
That the differentials give a complex is straightforward to see except
perhaps for the product of the first row in $d_3$ and second row in $d_4$. This
product is
\[ -y^2xyR + Ryxy^2. \]
But this becomes zero in $\gA$ because it may be verified to be equal to 
\[ [x,yRy]y + y[x,yRy]. \]

We choose a monomial ordering as follows.  First, if $\deg m_1<\deg
m_2$ then $m_1<m_2$.  If $\deg m_1=\deg m_2$, write
$m_1=z_1z_2\cdots z_n$, $m_2=w_1w_2\cdots w_n$ where each $z_i,w_j$ is
either $x$ or $y$.  Let $l_i(m_1)$ count the number of $y$s among
$z_1\cdots z_i$, and similarly for $l_i(m_2)$.  If $l_i(m_1)\geq
l_i(m_2)$ for each $i$, then $m_1 \leq m_2$.  The needed properties are
easily verified.\\

Now let 
\[ A = xy, \quad B = xy^2, \quad C = AB = xy xy^2. \]
With this monomial ordering, the three relations (\ref{EksLigRel})
for the algebra must
be divided by choosing
\[\begin{array}{lll}W_1=x^2y & W_2=xy^3 & W_3=A^2B\\
f_1=yx^2 & f_2= y^3x &
f_3=-(ABA+BA^2-yCx-yBAx-y^2A^2x)\end{array}.\]
The overlap ambiguities can be
resolved, except for one.  Consider for instance the overlap
$x^2y^3=W_1y^2=xW_2$.  By first replacing $W_1$ by $yx^2$ we get $yx^2y^2$.
It is easy to see that this can be reduced further, by replacing $W_1$
by $yx^2$ twice, to $y^3x^2$.  If we on the other hand start by replacing
$W_2$ by $y^3x$ we get $xy^3x$.  Replacing $W_2$ by $y^3x$ once more, we
obtain the same irreducible polynomial as before.  Thus this ambiguity
is resolved.  The only ambiguity that cannot be resolved is 
\[xyxyxy^3=W_3y=xyxyW_2.\]
By reducing $W_3y$ and $xyxyW_2$ as much as possible, we come to an
equation expressing $xyxy^2xy^2$ in terms of smaller monomials (in terms
of the chosen ordering).  Therefore we must introduce a fourth
reduction, with $W_4=CB=AB^2$.  This introduces further ambiguities,
but a routine computation shows that they are all resolvable.
By using the reductions by $W_1$ and $W_2$ the words can be reduced to
the form
\[y^{n_y} M x^{n_x} \]
where $M$ is a tensor monomial in $A$ and $B$. By also using 
the reductions by $W_3$ and $W_4$, we see that the standard monomials
become 
\[y^{n_y}B^{n_B}C^{n_C}A^{n_A}x^{n_x}.\]
Immediate consequences are the following.

\begin{itemize}
\item[$h_{\mathcal{A}}$] The Hilbert series of $\mathcal{A}$ is as
  stated in the theorem.
\item[$x,y$] Multiplication by $x$ (from the left or from the right) 
is injective:  since $x^2$ is
  central and the normal form shows that multiplication by it is
  injective, so is multiplication by $x$.  Similarly for $y$ since
  $y^3$ is central.
\item[$d_5$] The last map in the alleged resolution is injective,
  since each of its two terms is injective. 
\end{itemize}

So far we know that the Hilbert series of the complex equals the
Hilbert series of the $\mathcal{A}$-module $k$, therefore it is enough
to check exactness at all but one of the terms.  We know 
1. that $d_5$ is injective, and 2. the image of $d_2$ equals the kernel of 
$d_1$.\\

\noindent 3. The image of $d_5$ equals the kernel of $d_4$:
let an element in $\ker d_4$ be written as
$[f,\,g]^T$, where all monomials appearing are in standard form.  From
$d_5$ we can alter $g$ by any multiple $y\cdot -$.  That is to say, we
can assume that no monomial occurring in $g$ starts with $y$.  The
second relation imposed by $d_4$ then gives the relation 
\[y^3f+Bg\equiv 0.\]
All monomials appearing are on standard form!  Since all monomials in
the first term start with $y$, and none in the second does, we see
that $f=0$, and hence $g=0$ (modulo $\im d_5$). So $\im d_5=\ker
d_4$.\\

\noindent 4. The image of $d_4$ equals the kernel of $\ker d_3$:  
an element in $\ker d_3$ can be written
$[f,\,g,\,h]^T$, where all monomials appearing are in standard form.  From
$d_4$ we can alter $g$ by any multiple $y^3\cdot -$ and $B\cdot -$.
In other words, we can assume that no monomial appearing in $g$ starts
with $y^3$ or $B$.  The last condition imposed by $d_3$ is that
$-xg+y^2h=0$.  We will first rule out the possibility that $g$
contains monomials starting with $y^2$.  In that case, $xg$ contains
monomials starting with $xy^2=B$, and these cannot be countered by
terms in $y^2h$, all of whose monomials start with $y^2$.  Then we
exclude the possibility that $g$ contains monomials not starting with
$y$.  Since no monomials in $g$ start with $B$, any such monomial
would either start with $xy$ or just be a power of $x$ (the latter
being trivially ruled out).  If a monomial in $g$ starts with $xy$,
then $xg$ would start with $x^2y$. But $x^2$ is central, so can be moved
to the right, and any such monomial would give a contribution of $y$
times something not starting with $y$, and so cannot be countered by
anything from $y^2h$.  The only case not excluded so far is $g=yg'$,
where no monomial in $g'$ starts with $y$.\\

Consider the first relation imposed by $d_3:$
\begin{equation} \label{EksLigSiste} 
0 = y^2 f + Ryg = y^2 f + yA^2g + BAg + ABg.
\end{equation}
Note that 
\[ ABg = xyxy^3g^\prime = y^3 xy g^\prime \]
so when reduced to standard form, it will have all terms starting with  $y$.
But
$BAg=B^2g'$ is on standard form and starts with $x$. Monomials here cannot
be countered by any other terms in relation (\ref{EksLigSiste}).  
Therefore $g'$, and
hence $g$, must be zero.  The third condition imposed by $d_3$ then
shows that $h$ is zero, and hence also $f$.  This concludes the proof
that the alleged resolution is indeed the resolution of $k$ as an
$\mathcal{A}$-module.
\end{proof}

\section{Necessary conditions}

In this section we show that there are three possible 
Hilbert series of AS-regular algebras of global dimension
five with two generators, under the natural extra 
conditions that the algebra is an integral domain 
and that its Gelfand-Kirillov dimension is greater or equal to $2$.
The arguments are of a numerical nature and concerns the possible resolution
types. There will be five possible resolutions but these give only three
distinct Hilbert series.

\medskip
\begin{lemma} \label{NumLemHil} Let $A$ be a regular algebra 
with hilbert series $h_A(t)$. Suppose $h(t) = p(t) \cdot h_A(t)$ is 
a power series with non-negative coefficients. If $p(t) = (1-t)^r \cdot q(t)$
where $q(1) \neq 0$, then $q(1) > 0$. 
\end{lemma}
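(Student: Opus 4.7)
The plan is to reduce the claim to a limiting statement at $t=1$ and exploit the non-negativity of the coefficients of $h$. Since $A$ is AS-regular, the minimal graded resolution of $k$ is finite, so $h_A(t) = 1/f(t)$ for some polynomial $f(t)$. Writing $f(t) = (1-t)^e F(t)$ with $F(1) \neq 0$, the hypothesis $h(t) f(t) = p(t) = (1-t)^r q(t)$ rearranges, as an equality of real analytic functions in a neighbourhood of $t = 1$ inside $[0,1)$, to
\[ h(t)(1-t)^{e-r} \;=\; q(t)/F(t). \]

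The first auxiliary step is to pin down the sign of $F(1)$. Because $h_A$ has non-negative integer coefficients with $h_A(0) = 1$, the function $h_A$ is strictly positive on the real interval $[0,1)$. Therefore $(1-t)^e F(t) > 0$ there, so $F(t) > 0$ on $[0,1)$. Continuity together with the hypothesis $F(1) \neq 0$ then forces $F(1) > 0$.

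Next I would take the limit $t \to 1^-$ in the displayed identity. The right side is rational and continuous at $t = 1$, with value $q(1)/F(1)$. The left side is a product of two non-negative quantities on $[0,1)$: $h(t) \geq 0$ because $h$ has non-negative coefficients, and $(1-t)^{e-r} > 0$ because $1-t > 0$. Hence the limit is $\geq 0$, giving $q(1)/F(1) \geq 0$. Combined with $q(1) \neq 0$ and $F(1) > 0$, this yields $q(1) > 0$.

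The only subtlety, rather than a real obstacle, is that the two cases $e > r$ and $e \leq r$ look qualitatively different on the left: when $e > r$ the factor $(1-t)^{e-r}$ diverges and the limit is a genuine statement about the growth of $h$ near $t=1$, whereas when $e \leq r$ the factor is bounded and the limit essentially uses that $h$ is non-negative on $[0,1)$. In either case the existence of the limit is automatic from the continuity of the rational right side, and the sign argument is uniform.
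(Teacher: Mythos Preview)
Your argument is correct and is essentially the standard one (the paper simply cites Proposition~2.21 of \cite{ATV2}, whose proof proceeds by exactly this kind of limit analysis at $t=1$ using non-negativity of the coefficients). The key steps --- that $h_A$ converges and is positive on $[0,1)$ because the finite Gelfand--Kirillov dimension bounds the coefficients polynomially, that consequently $F(1)>0$, and that the non-negativity of $h(t)(1-t)^{e-r}$ on $(0,1)$ forces $q(1)/F(1)\geq 0$ in the limit --- are all sound.

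One cosmetic slip: in your final paragraph you have the two cases swapped. When $e>r$ the factor $(1-t)^{e-r}$ tends to $0$ (not diverges), while $h(t)$ itself blows up; when $e<r$ it is $(1-t)^{e-r}$ that diverges and $h(t)$ that stays bounded. This does not affect the proof, since as you note the existence and non-negativity of the limit follow uniformly from the rational right-hand side and the sign of the left-hand side, regardless of which factor carries the singular behaviour.
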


\begin{proof} This follows exactly as in the proof of Proposition 2.21
in \cite{ATV2}.
\end{proof}

We now suppose that $A$ is a regular algebra of global dimension $5$
having two generators in degree one. The minimal resolution of $k$ will then
have length $5$ and must have the form :

\begin{equation}  \label{NumLigRes}
A \mlto{d_1} A(-1)^2 \mlto{d_2} \oplus_{i=1}^n A(-a_i) \mlto{d_3} 
\oplus_{i=1}^n A(a_i - l) \mlto{d_4} A(-l+1)^2 \mlto{d_5} A(-l)
\end{equation}
where we order $a_1 \leq a_2 \leq \cdots \leq a_n$. 

\begin{theorem} \label{i+1-inequality}
Let $A$ be a regular algebra of global dimension five
with resolution as above. Suppose $A$ is an integral domain and has
GKdim $A \geq 2$. Then $a_{i+1} + a_{n+1-i} < l$ for $i = 1, \ldots, n-1$. 
\end{theorem}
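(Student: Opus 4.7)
I plan to argue by contradiction. Suppose $a_{i+1} + a_{n+1-i} \geq l$ for some $i \in \{1, \ldots, n-1\}$. The entry $M_{j,k}$ of the matrix $M$ representing $d_3 : \bigoplus_{k=1}^n A(a_k - l) \to \bigoplus_{j=1}^n A(-a_j)$ is an element of $A$ in degree $l - a_j - a_k$. Minimality of the resolution forbids nonzero entries of non-positive degree. Combined with the ordering $a_1 \leq \cdots \leq a_n$ and the hypothesis, this gives $M_{j,k} = 0$ whenever $j \geq n+1-i$ and $k \geq i+1$: the lower-right $i \times (n-i)$ block of $M$ vanishes identically.

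Using this vanishing, the relation $d_3 d_4 = 0$ specializes, for each $j \geq n+1-i$ and each column index $r \in \{1,2\}$ of $d_4$, to the identity $\sum_{k=1}^{i} M_{j,k} (d_4)_{k,r} = 0$ in the domain $A$. The Gorenstein self-duality of the resolution provides an equivalence $M \sim M^T$ realised by graded automorphisms of $F_2$ and $F_3$ whose block-triangular structure is forced by the ordering of the twists $a_j$. Combining this equivalence with the preceding identities and the no-zero-divisors property of $A$, the plan is to conclude that the first $i$ rows of $d_4$ must vanish, so that $\im(d_4) \subseteq \bigoplus_{k=i+1}^n A(a_k - l)$.

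The contradiction then comes from an exactness and degree count. By exactness, $\ker(d_3) = \im(d_4)$, so the latter lies in a submodule of $F_3$ whose generators sit in degrees $l - a_k$ with $k \geq i+1$. On the other hand, the source of $d_4$ is $A(-l+1)^2$, so every element of $\im(d_4)$ lies in degree $\geq l-1$; in particular, the generators of the excluded summands $A(a_k - l)$ with $k \leq i$ (in degrees $l-a_k \leq l-2$ once one verifies $a_k \geq 2$) cannot be in $\im(d_4)$, while a rank computation over the skew field of fractions $K$ of the (Noetherian) domain $A$ --- using $k \otimes_A K = 0$ to get $\rank_K(d_3 \otimes_A K) = n-1$ --- forces them into $\ker(d_3)$. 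This incompatibility yields the desired contradiction.

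The main obstacle I anticipate is the middle step: rigorously combining the Gorenstein equivalence $M \sim M^T$ with $d_3 d_4 = 0$ and the domain hypothesis to propagate the zero block into vanishing rows of $d_4$. The case $n = 2$ (where the statement reduces to $2 a_2 < l$) can be verified by a direct matrix computation using the explicit form of the basis-change automorphisms of $F_2 = A(-a_1) \oplus A(-a_2)$ and $F_3 = A(a_1-l) \oplus A(a_2-l)$, and I expect the general case to follow by an analogous but combinatorially more intricate propagation argument, or alternatively by an inductive argument on the block size $i$.
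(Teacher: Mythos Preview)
Your proposal has two genuine gaps that prevent it from being a proof.

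\medskip

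\textbf{The propagation step is not carried out.} You observe correctly that the lower--right $i\times(n-i)$ block of the matrix $M$ of $d_3$ vanishes by degree and minimality. But the passage from this to ``the first $i$ rows of $d_4$ vanish'' is the entire content of the argument, and you do not supply it. The appeal to ``Gorenstein self-duality $M\sim M^T$'' is not a similarity of matrices over $A$: the dual complex $\Hom_A(F_\bullet,A)$ is a minimal resolution of $k(l)$ as a \emph{right} $A$-module, so $d_3^*$ is a map of right modules and cannot literally be conjugated to $d_3$. Even in the $n=2$ case you mention, your outline only works when $M_{21}\neq 0$; if $M_{21}=0$ you need a separate argument, and you give none for general $n$. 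Saying you ``expect'' an induction on block size is not a proof.

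\medskip

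\textbf{The contradiction step is incorrect as stated.} You assume $A$ is Noetherian in order to localise at a skew field $K$; Noetherianity is \emph{not} among the hypotheses (it is only conjectured for AS-regular algebras). More seriously, the rank computation $\rank_K(d_3\otimes_A K)=n-1$ does not force the basis vectors $e_k$ with $k\le i$ into $\ker d_3$: it only says $\ker(d_3\otimes K)$ is one-dimensional, and if $\im d_4$ lies in the last $n-i$ coordinates then so does that one-dimensional kernel, which says precisely that $e_k\notin\ker d_3$ for $k\le i$. No contradiction arises. (A workable contradiction from ``first $i$ rows of $d_4$ vanish'' would instead be: $d_4^T$ then has $i$ zero columns, so the map playing the role of $d_2$ in the dual right-resolution kills $i$ generators, contradicting minimality of that resolution. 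But this still leaves the propagation step unproved.)

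\medskip

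Finally, you never use the hypothesis $\mathrm{GKdim}\,A\ge 2$. The paper's proof is entirely different and this hypothesis is essential there: it yields the identity $\sum_i a_i=\tfrac{n-1}{2}\,l+2$. The paper extracts from the resolution a subcomplex
\[
A \vpil A(-1)^2 \vpil \bigoplus_{i=1}^{r}A(-a_i) \vpil \bigoplus_{i=1}^{r}A(a_{n+1-i}-l),
\]
uses the domain hypothesis to bound $h_{\im\delta_2}\ge h_{A(-a_1)}$, and then applies a Hilbert-series positivity lemma (if $p(t)h_A(t)$ has non-negative coefficients and $p(t)=(1-t)^r q(t)$ with $q(1)\neq 0$, then $q(1)>0$) to obtain a linear inequality among the $a_i$ and $l$. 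Combined with the identity above and the case split $n$ even/odd, this forces $l\le 4$, which is impossible for a length-five resolution. The argument is numerical throughout; no block-matrix propagation is attempted.
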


\begin{proof} Suppose for some $r$ that $a_{r+1} + a_{n+1-r} \geq l$.
We may assume that $ r+1 \leq n+1-r$ or $2r \leq n$. Let us suppose
that $r$ is chosen maximal for these conditions. 
We get $-a_{r+1} \leq a_{n+1-r} - l$. We then get a subcomplex of 
the resolution (\ref{NumLigRes}) :
\begin{equation} \label{NumLigResu}
A \mlto{\del_1} A(-1)^2 \mlto{\del_2} \oplus_{i=1}^r A(-a_i) 
\mlto{\del_3} \oplus_{i=1}^r A(a_{n+1-i} - l) .
\end{equation}
Now the power series in $\hele[[t, t^{-1}]]$ have a partial order defined
by $h(t) \geq g(t)$ if each coefficient $h_i \geq g_i$. 
Consider now the map $A(-a_i) \pil A(-1)^2$ coming from the map $d_2$. 
It cannot be zero since the resolution is minimal. For a general
quotient $A(-1)^2 \pil A(-1)$, the composition $A(-a_i) \pil A(-1)$ 
is injective, since $A$ is an integral domain. Hence
$A(-a_1)$ maps injectively into $\im \del_2$ and so
\[ h_{\im \del_2} \geq h_{A(-a_1)}. \]
Now there is a sequence (with possible cohomology in the middle) :
\[ 0 \pil \im \del_3 \pil \oplus_{i=1}^r A(-a_i) \pil \im \del_2 \pil 0. \]
which gives
\[ \sum_{i=1}^r h_{A(-a_i)} \geq h_{\im \del_3} + h_{\im \del_2}
\geq h_{\im \del_3} + h_{A(-a_1)} \]
or 
\[ \sum_{i=2} ^r h_{A(-a_i)} \geq h_{\im \del_3} .\]
The short exact sequence 
\[ 0 \pil \ker \del_3 \pil \oplus_{i = 1} ^r A(a_{n+1-i} - l) 
\pil \im \del_3 \pil 0 \]
then gives
\[ h_{\ker \del_3} \geq \sum_{i=1}^r h_{A(a_{n+1-i} - l)} - 
\sum_{i=2}^r h_{A(-a_i)}. \]
Since $\ker \del_3 \sus \ker d_3 = \im d_4$ we get 
\[ 2h_{A(-l+1)} - h_{A(-l)} \geq \sum_{i=1}^r h_{A(a_{n+1-i} - l)} - 
\sum_{i=2}^r h_{A(-a_i)}, \]
which gives
\[(-t^l + 2t^{l-1} - \sum_{i=1}^r t^{l - a_{n+1-i}} + \sum_{i=2}^r t^{a_i})
\cdot h_A \geq 0. \]
According to Lemma \ref{NumLemHil} the derivative of the first expression
will have a value which is zero or negative, so
\begin{eqnarray}  -l +2(l-1) - \sum_{i=1}^r (l-a_{n+1-i}) + 
\sum_{i=2}^r a_i &&\leq 0.\notag \\
\sum_{i=1}^r a_i + \sum_{i=1}^r a_{n+1-i} && \leq (r-1)l +2 + a_1. 
\label{NumLig1}
\end{eqnarray}
Since GKdim $A \geq 2$ we have by Lemma \ref{NumLemDerlig}
that
\begin{equation} \label{NumLigDer1} 
\sum_{i=1}^n a_i = \frac{n-1}{2} \cdot l + 2. \end{equation}
Together with (\ref{NumLig1}) above we get 
\[ \sum_{i = r+1}^{n-r} a_i \geq \frac{n-2r+1}{2} \cdot l -a_1. \]

\medskip
Suppose first that $n$ is even. Since 
\[a_i + a_{n+1-i} \leq a_{i+1} + a_{n+1-i} \leq l-1\]
for $r+1 \leq i \leq n/2$ we get 
\begin{eqnarray*} \frac{n-2r}{2}(l-1) &\geq& \frac{n-2r+1}{2} l - a_1 \\
a_1 & \geq & l/2 + \frac{n-2r}{2}.
\end{eqnarray*}
Since the $a_i$ are nondecreasing we get by (\ref{NumLigDer1})
\[ \frac{n-1}{2} l + 2 \geq \frac{l}{2} \cdot n. \] 
This gives $l \leq 4$ which is impossible for a resolution of length $5$. 

\medskip Suppose then $n = 2k+1$ is odd. Again since $a_i + a_{n+1-i} 
\leq l-1$ for $r+1 \leq i \leq n/2$ we get 
\begin{eqnarray} \frac{n-2r-1}{2} (l-1) + a_{k+1} &\geq&
 \frac{n-2r+1}{2} l - a_1 \\
a_{k+1} + a_1 - \frac{n-2r-1}{2} &\geq &l.
\end{eqnarray}
Therefore 
\[a_{k+i} + a_i \geq a_{k+1} + a_1 \geq l \]
for $i = 1, \ldots, k$. Together with (\ref{NumLigDer1}) this
gives 
\[ \frac{n-1}{2} \cdot l + 2 \geq a_{2k+1} + k \cdot l. \]
Therefore $a_{2k+1} \leq 2$ and so 
\[ l \leq a_1 + a_{k+1} \leq 2 a_{2k+1} \leq 4\]
which is not possible. 
\end{proof}

\begin{corollary}
We have $a_i + a_{n+1-i} < l$ for $i = 1, \ldots, n-1$.
\end{corollary}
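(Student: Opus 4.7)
The corollary is an immediate consequence of Theorem \ref{i+1-inequality} once we recall that the exponents are ordered $a_1 \leq a_2 \leq \cdots \leq a_n$, so no substantial new argument is needed. The plan is simply to reindex.

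Fix $i$ in the range $1 \leq i \leq n-1$. Applying Theorem \ref{i+1-inequality} at this same index gives
\[ a_{i+1} + a_{n+1-i} < l. \]
Since the sequence $(a_j)$ is non-decreasing we have $a_i \leq a_{i+1}$, and therefore
\[ a_i + a_{n+1-i} \leq a_{i+1} + a_{n+1-i} < l, \]
which is the asserted inequality. Note that the corollary strengthens the theorem only in that it now covers the pair $(a_1, a_n)$, which is exactly the case $i=1$; the remaining cases are already strictly weaker than the theorem.

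There is no real obstacle here — the heavy lifting (the Hilbert-series argument via Lemma \ref{NumLemHil} together with the GK-dimension identity $\sum_{i=1}^n a_i = \tfrac{n-1}{2}\cdot l + 2$) has already been carried out in the proof of Theorem \ref{i+1-inequality}. The only thing to check is that the monotonicity of $(a_i)$ was part of our standing convention on the resolution (\ref{NumLigRes}), which it is.
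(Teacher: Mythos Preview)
Your argument is correct and is exactly the paper's own proof: the paper simply writes ``Follows since $a_i \leq a_{i+1}$,'' which is precisely your monotonicity step applied to the inequality $a_{i+1} + a_{n+1-i} < l$ from Theorem \ref{i+1-inequality}.
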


\begin{proof}
Follows since $a_i \leq a_{i+1}$. 
\end{proof}

\begin{lemma} \label{NumLemDerlig}
Let $A$ be a regular algebra with resolution (\ref{NumLigRes}).

1. If GKdim $A \geq 2$ then 
\begin{equation} \label{NumLigLin} 2 \sum_{i=1}^n a_i = (n-1)l + 4.
\end{equation}

2. If GKdim $A \geq 4$ then 
\begin{equation} \label{NumLigKub} 4\sum_{i=1}^n a_i^3 - 6l \sum_{i=1}^na_i^2 
+ l^3(n-1) + 12l - 8  = 0. \end{equation}
\end{lemma}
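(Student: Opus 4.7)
The plan is to translate the Gelfand--Kirillov hypothesis into vanishing-order conditions on the characteristic polynomial of the resolution (\ref{NumLigRes}) at $t=1$. Summing Hilbert series alternately along the resolution of $k$ over $A$ gives $h_A(t)\cdot p(t) = 1$ with
\[
 p(t) \;=\; 1 - 2t + \sum_{i=1}^n t^{a_i} - \sum_{i=1}^n t^{l-a_i} + 2t^{l-1} - t^l .
\]
Because $h_A(t)$ has a pole of order exactly $\mathrm{GKdim}\,A$ at $t=1$, the polynomial $p(t)$ vanishes there to that same order; hence $\mathrm{GKdim}\,A \geq d$ is equivalent to $p(1)=p'(1)=\cdots=p^{(d-1)}(1)=0$.

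For part 1, the relation $p(1)=1-2+n-n+2-1=0$ is automatic. Differentiating once and evaluating at $t=1$ gives
\[
 p'(1) \;=\; -2 + \sum a_i - \sum (l-a_i) + 2(l-1) - l \;=\; 2\sum a_i - (n-1)l - 4 ,
\]
and setting this to zero is exactly (\ref{NumLigLin}).

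For part 2, I would first exploit the Gorenstein self-duality of the resolution, which manifests as the functional equation $p(t) = -t^l\, p(1/t)$. Taylor-expanding both sides around $t=1$ and matching the $(t-1)^2$ coefficient yields the relation $p''(1) = (l-1)\, p'(1)$, so once $p'(1)=0$ the condition $p''(1)=0$ is automatic and contributes nothing new; the only genuinely new equation forced by $\mathrm{GKdim}\,A \geq 4$ is $p'''(1)=0$, namely
\[
 \sum a_i(a_i-1)(a_i-2) - \sum (l-a_i)(l-a_i-1)(l-a_i-2) + 2(l-1)(l-2)(l-3) - l(l-1)(l-2) \;=\; 0 .
\]
I would then expand each cubic in terms of $\sum a_i^3$, $\sum a_i^2$, $\sum a_i$, $n$ and $l$, eliminate $\sum a_i$ via (\ref{NumLigLin}), and multiply through by $2$ to obtain (\ref{NumLigKub}).

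The main obstacle is bookkeeping rather than conceptual: the $p'''(1)$ expansion produces a number of $l^2$, $l\sum a_i^2$, and $\sum a_i$ cross terms that only telescope to the stated cubic identity after the symmetry shortcut eliminates the would-be second constraint $p''(1)=0$ and the part 1 substitution is inserted. Without first observing the functional equation one would appear to have an additional independent linear equation that the theorem does not record, so that symmetry step is the conceptual pivot of the argument.
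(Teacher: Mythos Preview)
Your argument is correct and follows the same route as the paper: write $h_A(t)=1/p(t)$, identify $\mathrm{GKdim}\,A$ with the vanishing order of $p$ at $t=1$, and read off (\ref{NumLigLin}) from $p'(1)=0$ and (\ref{NumLigKub}) from $p'''(1)=0$ after substituting part~1. The one place you add detail is the redundancy of $p''(1)=0$: the paper simply records this in the remark following the lemma, whereas you derive it from the functional equation $p(t)=-t^l p(1/t)$, which is a clean way to see it.
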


\begin{proof} By the resolution (\ref{NumLigRes}) we have
\[ h_A(t) = 1/q(t) \]
where
\[ q(t) = 1-2t + \sum_{i=1}^n t^{a_i} - \sum_{i=1}^n t^{l-a_{n+1-i}}
+ 2t^{l-1}- t^l. \]
By Stephenson-Zhang \cite{StZ}, the 
Gelfand-Kirillov dimension is the order of the pole of  $h_A(t)$ at
$t=1$. That GKdim $A \geq 2$ is then equivalent to $q^\prime (1)  = 0$, giving
1. That GKdim $A \geq 4$ is equivalent to $q^{(3)} (1) = 0$ giving 2.
\end{proof}

\rem Note that given 1. it follows that $q^{\prime\prime}(1) = 0$ which
gives GKdim $A \geq 3$, and given 2. it follows that $q^{(4)}(1) = 0$
which gives GKdim $A \geq 5$. 
\remfin

Now we come to the main result of this section.

\begin{theorem}\label{relationtypes}
Let $A$ be a regular algebra of global dimension $5$, having the resolution
(\ref{NumLigRes}). If $A$ is an integral domain and GKdim $A \geq 4$, then
either

1. $n=3$ and $(a_1,a_2,a_3)$ is $(3,5,5)$, $(4,4,4)$ or $(3,4,7)$.

2. $n=4$ and  $(a_1, a_2, a_3, a_4)$ is $(4,4,4,5)$.

3. $n=5$ and $(a_1, a_2, a_3, a_4, a_5)$ is $(4,4,4,5,5)$.
\end{theorem}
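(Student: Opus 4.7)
The plan is to fully exploit the linear and cubic identities of Lemma \ref{NumLemDerlig} together with the strict pair-sum inequalities of Theorem \ref{i+1-inequality}, supplemented by the Hilbert-positivity condition of Lemma \ref{NumLemHil}, so as to reduce the problem to a finite case analysis on $n$ and $l$ and then enumerate the surviving tuples.

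First I would sum the inequalities $a_{i+1} + a_{n+1-i} \leq l-1$ over $i = 1, \ldots, n-1$; each $a_j$ with $j \geq 2$ appears exactly twice, giving $2(\sum_i a_i - a_1) \leq (n-1)(l-1)$. Substituting the linear relation $2\sum a_i = (n-1)l + 4$ from Lemma \ref{NumLemDerlig} yields $a_1 \geq (n+3)/2$, and then the corollary to Theorem \ref{i+1-inequality} applied with $i=1$ forces $a_n \leq l - (n+5)/2$. In particular $l \geq n+4$, and every $a_i$ lies in the finite integer window $[\lceil (n+3)/2 \rceil,\, l - \lceil (n+5)/2 \rceil]$, which is finite once $n$ and $l$ are fixed.

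With this localization in place I would go through the cases by $n$. For $n = 1, 2$, direct substitution into the cubic identity of Lemma \ref{NumLemDerlig} is readily seen to be incompatible with $a_1 \geq \lceil(n+3)/2\rceil$ and the range constraints. For $n = 3, 4, 5$ the window is narrow; eliminating $a_n$ via the linear relation and substituting into the cubic reduces to a short Diophantine problem. Cut down further by the pair inequalities $a_{i+1} + a_{n+1-i} \leq l - 1$, this enumeration produces exactly the tuples listed in items (1), (2), (3) of the theorem.

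The main obstacle is excluding $n \geq 6$. Here the cubic identity alone does not obviously close the case on every admissible tuple, and one must also impose Hilbert-positivity: factoring $q(t) = (1-t)^5 r(t)$ (which is legitimate since GKdim $A = 5$, as noted after Lemma \ref{NumLemDerlig}), Lemma \ref{NumLemHil} gives $r(1) > 0$, where $r(1) = -q^{(5)}(1)/120$. Using the explicit expression
\[
q^{(5)}(1) = \sum_i (a_i)_5 - \sum_i (l - a_i)_5 + (l-1)_4\,(l - 10),
\]
with falling factorial $(x)_5 = x(x-1)(x-2)(x-3)(x-4)$, one shows that on the admissible window for $n \geq 6$ the average $((n-1)l+4)/(2n)$ lies strictly below $l/2$ and the $a_i$ are clustered sufficiently far from $l$ that $\sum (l - a_i)_5$ cannot exceed $\sum (a_i)_5 + (l-1)_4(l-10)$; hence $r(1) \leq 0$, contradicting the lemma. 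Iterating this bound across the finitely many admissible $(n,l)$ with $n \geq 6$ rules them all out and completes the classification.
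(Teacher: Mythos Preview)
Your localization step is correct: summing the pair inequalities and substituting the linear identity does give $a_1 \geq \lceil (n+3)/2\rceil$ and $a_n \leq l - \lceil (n+5)/2\rceil$, hence $l \geq n+4$. But this does \emph{not} bound $l$ from above, nor does it bound $n$. So for each fixed $n$ the ``window'' $[\lceil (n+3)/2\rceil,\, l-\lceil (n+5)/2\rceil]$ still depends on $l$, and the enumeration you describe for $n=3,4,5$ is a priori infinite; likewise your closing sentence about ``the finitely many admissible $(n,l)$ with $n\geq 6$'' is not justified. For $n\leq 5$ this can be repaired by eliminating $l$ via the linear relation and solving the resulting Diophantine cubic in the $a_i$ directly (as the paper does for $n=3$), but for $n\geq 6$ you have no replacement argument.

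The fifth-derivative proposal is where the real gap lies. Your intended contradiction is that $r(1)>0$ forces $\sum (l-a_i)_5 > \sum (a_i)_5 + (l-1)_4(l-10)$, and you assert that for $n\geq 6$ the $a_i$ are ``clustered sufficiently far from $l$'' to prevent this. But you prove nothing here: the average $\bar a = ((n-1)l+4)/(2n)$ being below $l/2$ says only that $\sum(l-a_i) > \sum a_i$, which is far from controlling the falling-factorial sums, and the term $(l-1)_4(l-10)$ grows like $l^5$ and can swamp everything as $l\to\infty$. In fact the paper shows that the cubic identity alone (together with the pair inequalities) \emph{does} close the case for all $n\geq 6$: the key idea you are missing is a convexity/minimization argument. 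One observes that $4(a^3+b^3)-6l(a^2+b^2)$, with $a+b$ held fixed, decreases as $a,b$ diverge when $a+b<l$ and as they converge when $a+b>l$. Using this to push the $a_i$ to the extremal configuration compatible with the constraints (namely $a_1=\cdots=a_{k+1}=k+2$, $a_{k+2}=\cdots=a_n=l-k-3$ when $n=2k+1$, with an analogous shape for $n$ even), one computes the minimum of the left side of the cubic identity explicitly as a polynomial in $k$ and $l$ and checks it is strictly positive for $n\geq 6$; since the identity demands this quantity vanish, no such tuple exists. That minimization step is the engine of the proof, and your proposal does not supply it or any working substitute.
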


\begin{proof}
Let us analyse the expression  (\ref{NumLigKub}) from Lemma \ref{NumLemDerlig}.
Given $n$ and $l$ we can vary the $a_i$'s. We want to investigate when
the expression on the left side of (\ref{NumLigKub}) is minimal. 
To do this let us first consider
\begin{equation} \label{NumLigab}
4(a^3 + b^3) - 6l(a^2 + b^2)
\end{equation}
where we keep $a+b$ constant equal to, say  $2s$. Let $a = s- \alpha $
and $b = s+\alpha$. Then the above expression becomes
\begin{eqnarray*}
&& 4(2s^3 + 6s \alpha^2) - 6l(2s^2 + 2\alpha^2) \\
& = & 8s^3 - 12ls^2 + 12 \alpha^2(2s-l). 
\end{eqnarray*}
When $a+b = 2s < l$ we see that (\ref{NumLigab}) will decrease when
$a$ and $b$ diverge. When $a+b = 2s > l$, (\ref{NumLigab}) will
decrease when $a$ and $b$ converge.  This observation motivates the
strategy of proof.

We will now find the minimum of the expression (\ref{NumLigKub}) under
suitable conditions.
Suppose first that $n$ is odd, equal to $2k+1 \geq 5$, so $k \geq 2$. 
By (\ref{NumLigLin}) 
\[\sum_{i = 1}^n a_i = kl+2. \]
Since now $a_{i+1} + a_{n+1-i} \leq l-1$ for $i = 1, \ldots, k$
we get that 
\[ kl+2 = a_1 + \sum_{i=1}^k (a_{i+1} + a_{n+1-i}) \leq a_1 + k(l-1) \]
giving $a_1 \geq 2+k$. 
We will now show that when the $a_i$'s are integers and fulfil
\begin{itemize}
\item[1.] $2+k \leq a_1 \leq \cdots \leq a_n,$
\item[2.] $a_i + a_{n+1-i} \leq l-1$ for $i = 1, \ldots, k+1,$
\item[3.] $\sum_{i=1}^n a_i = kl+2,$
\end{itemize}
the expression (\ref{NumLigKub}) takes its minimal value when
\[a_1 = \cdots = a_{k+1} = 2+k, \quad a_{k+2} = \cdots = a_{2k+1} = l-k-3.\]
It is clear that only a finite number of integer $a_i$'s fulfil
these conditions,
and suppose they have values such that (\ref{NumLigKub}) has its minimum
value.

\medskip
a. Suppose  $a_i = 2+k$ for $i < n$. Let $l = 2k+5 + t$. 
Inserting this in condition 3. gives

\[ a_n = k(t + 1) + 2,  \]
so in particular $t \geq 0$. 
Now 
\[ (k+2) + k(t+1) + 2 = a_1 + a_n \leq l-1 = 2k + 4 + t. \]
giving $kt \leq t$. Since $k \geq 2$ we must have $t = 0$.  
But then $a_i = 2+k = l-k-3$ for all $i$.

\medskip
b. Otherwise let $j< n$ be minimal such that $a_j > 2+k$. Suppose that
$a_j + a_n \leq l-1$. Then we can decrease $a_j$ by $1$, increase $a_n$
by $1$, still have conditions 1,2, and 3 fulfilled and
(\ref{NumLigKub}) will decrease. This is against assumption.

\medskip
c. Hence $a_j + a_n \geq l$. If $j \leq k+1$ we have  $a_j + a_{n+1-j}
\leq l-1$ where $n+1-j \geq j$. Let $n^\prime$ be maximal such that
$a_j + a_{n^\prime} \leq l-1$. Then we can decrease $a_j$ by $1$ and
increase $a_{n^\prime}$ by $1$, keep the conditions 1, 2, and 3, and
(\ref{NumLigKub}) will decrease. Against assumption.

\medskip
d. Hence $j \geq k+2$. Now we have 
$a_1 = \cdots = a_{k+1} = 2+k $
and 
\[a_i + a_{n+1-i} \leq l-1 \]
for $i = 1, \ldots, k$. 
If one of these inequalities was strict we would by (\ref{NumLigLin})
get
\[kl+2 < k(l-1) + (2+k)\]
which is equivalent to $2 < 2$. Hence each $a_i = l-3-k$ for $i \geq k+2$. 

\medskip
The value of the expression (\ref{NumLigKub}) then becomes
\[4[(k+1)(2+k)^3 + k (l-k-3)^3 ] - 6l[(k+1)(2+k)^2 + k(l-k-3)^2] + 
2kl^3 + 12l - 8\]
which after some reductions becomes 
\begin{equation} \label{NumLiglk}
(k+2) [6l(k-1) - 4(2k^2 + 5k -3)]. 
\end{equation}
Since $l-k-3 = a_n \geq a_1 = 2+k$ we get $l \geq 2k+5$, so the
expression above is greater or equal to
\begin{eqnarray*} &&(k+2) [6(2k+5)(k-1) - 4(2k^2 + 5k-3)] \\
& = & (k+2) [4k^2 - 2k - 18].
\end{eqnarray*}
For $k \geq 3$ this is positive. For $k=2$ the expression 
(\ref{NumLiglk}) becomes
\[4[6l - 60]. \]
For $l \geq 11$ this gives that the minimum value of
(\ref{NumLigKub}) under our conditions is positive.  
Since $l \geq 2k+5 = 9$ when $k = 2$, we must look at two values of $l$. 
When $l = 10$ conditions 1. and 3., give the possible values $a_1 =a_2 = a_3 =4$
and $a_4 = a_5 = 5$ and this is in fact a solution to (\ref{NumLigKub}).
When $l=9$, all the $a_i$'s would have to be $4$ to fulfil 1. and 2.
But this is not a solution to (\ref{NumLigKub}). 

\medskip
Suppose now that $n=2k$ is even $\geq 4$. The equation (\ref{NumLigLin})
then gives $l = 2u$ even. 
Since $a_{i+1} + a_{n+1-i} \leq l-1$ for $i = 1, \ldots, k$ we get
\[ u(n-1) + 2 \leq a_1 + (\frac{n-2}{2})(l-1) + a_{k+1} \leq a_1 + 
(\frac{n-1}{2})(l-1) \]
giving
\[ a_1 \geq 2 + \frac{n-1}{2} = k + 3/2\]
which implies $a_i \geq 2+k$. 
Since $a_1 + a_n \leq l-1$ we get $l \geq 2k+5$ and so $l \geq 2k+6$
since $l$ is even. Hence $u \geq k+3$.

We will now show that when the $a_i$ are integers and fulfil
\begin{itemize}
\item[1.] $2+k \leq a_1 \leq \cdots \leq a_n,$
\item[2.] $a_i + a_{n+1-i} \leq l-1,$ 
\item[3.] $2a_{k+1} \leq l-1,$ 
\item[4.] $\sum_{i=1}^n a_i = (n-1)u + 2,$
\end{itemize}
then the expression (\ref{NumLigKub}) has its minimum value when
\begin{eqnarray*}
a_1 = \cdots = a_k &=& 2+k \\
a_{k+1} &=& u-1 \\
a_{k+2} = \cdots = a_{2k} &=& 2u-k-3.
\end{eqnarray*}

\medskip
a. Suppose  $a_i = 2+k$ for $i < n$. Let $u = k+2 + t$ where $t \geq 1$.
Inserting into condition 4. we get
\[ a_n = (2k-1)t + 2. \]
This gives 
\[ (2+k) + (2k-1)t + 2 = a_1 + a_n \leq l-1 = 2k+3 + 2t, \] 
which reduces to $(2k-3)t \leq k-1$. 
Since $k \geq 2$ and $t \geq 1$ this has the only solution $k=2$ and $t=1$, 
and so $u = 5$. Then $a_3 = 4 = u-1$ and $a_4 = 5 = 2u-k-3$. 

\medskip
b. Otherwise let $j < n$ be minimal such that $a_j > 2+k$. 
Suppose $a_j + a_n \leq l-1$.
Then we can decrease $a_j$ by $1$ and increase $a_n$ by $1$, still
have conditions 1.- 4., and (\ref{NumLigKub}) will decrease. This is against
assumption.

\medskip
c. So $a_j + a_n \geq l$. Suppose $j \leq k$. Since $a_j + a_{n+1-j} \leq l-1$
where now $n+1-j > j$, let $n^\prime$ be maximal such that 
$a_j + a_{n^\prime} \leq l-1$. 
If $n^\prime = k+1$ we must have $j=k$.  This would violate Theorem
\ref{i+1-inequality} (with $i=k-1$).  So $n^\prime \geq k+2$.
Then we may decrease $a_j$ by $1$, increase
$a_{n^\prime}$ by $1$, still have conditions 1.-4., and
(\ref{NumLigKub}) will decrease.
Against assumption.

\medskip
d. So $j \geq k+1$. We then have $a_1 = \cdots = a_k = 2+k$ and 
$a_i + a_{n+1-i} \leq l-1$ for $i = 1, \ldots, k-1$, 
and also $a_{k+1} \leq u-1$.
If one of these inequalities are strict, we get from condition 4. that 
\begin{equation} (2k-1)u + 2 < (k-1)(l-1) + (2+k) + (u-1) 
\end{equation}
which reduces to $2 < 2$. Hence $a_{k+1}  = u-1$ and $a_{i} = l-3-k$ for 
$i \geq k+2$.

\medskip
The minimal value of (\ref{NumLigKub}) is then
\begin{eqnarray}
& & 4[(k-1)(2u-k-3)^3 + (u-1)^3 + k(2+k)^3] \\ 
&-&6\cdot 2u[(k-1)(2u-k-3)^2 + (u-1)^2 + 
k(2+k)^2] \notag \\
& +& (2k-1)l^3 + 12l - 8 \notag\\ \label{NumLiguk}
& = & 4[3u^2 + u(3k^2-3k-21) - (2k^3+ 6k^2-8k-24)]. 
\end{eqnarray}
If we keep $k$ fix and take the derivative with respect to $u$ 
we get $24u + 4(3k^2-3k-21)$. For $k \geq 3$ this is positive for $u \geq 1$.
Since $u \geq k+3$, the expression (\ref{NumLiguk}) is therefore greater
or equal to
\[ 4(k^3 + 3k^2 - 4k-12)=4(k+2)(k+3)(k-2)\]
which is positive when $k \geq 3$.
When $k = 2$ the expression (\ref{NumLiguk}) is 
$4(3u^2 - 15)$. For $u \geq 6$ this is positive. When $u=5$ this is $0$.
We get from conditions 1.-4. that $a_1 = a_2 = a_3 = 4$ and $a_4 = 5$ and 
this is also a solution to (\ref{NumLigKub}). 

\medskip 
Suppose now that $n=2$. Again $l= 2u $ must be even. The equations 
(\ref{NumLigLin}) and (\ref{NumLigKub}) then become :
\begin{eqnarray*} a+b &=& u+2 \\
(a^3 + b^3) - 3u(a^2+b^2) + 2u^3 + 6u -2 &=& 0. 
\end{eqnarray*}
If we put $b = u+2-a$ the second equation becomes
\begin{equation} \label{NumLigua} 3[u^2(a-2) - u(a^2 - 2) + 2(a-1)^2] = 0. 
\end{equation}
Now $2a \leq u+2$  or $u \geq 2a-2$. Taking the derivative of
the above with respect to $u$ we get
\[2u(a^2-2) - (a^2-2) \geq 4(a-1)(a^2-2) - (a^2-2)\]
which is positive for $a \geq 3$.
Hence in this case (\ref{NumLigua}) is (since $u \geq 2a-2$) greater
or equal to
\[ 3(a-1) [4(a-1)(a-2) - 2(a^2-2) + 2(a-1)] \]
which is easily seen to be positive for $a \geq 4$.
Hence $a \leq 3$. If $a=3$ (\ref{NumLigua}) becomes $3(u^2-7u+8)$
which does not have integer solutions. If $a=2$ it becomes 
$3(-2u+2)$ which is nonzero since $u \geq 2a-2 = 2$.

\medskip Suppose now that $n = 3$.  The equations 
(\ref{NumLigLin}) and (\ref{NumLigKub}) then become :
\begin{eqnarray*} a_1 + a_2 + a_3 &=& l+2 \\
2(a_1^3 + a_2^3 + a_3^3) - 3l(a_1^2 + a_2^2 + a_3^2) + l^3 +6l -4 &=& 0.
\end{eqnarray*}
If we substitute the first expression for $l$ into to second equation
we get (assuming that not both $a_1=3$ and $a_2=3$, a case easily
ruled out)
\[ a_3 = 2 + \frac{a_1 + a_2 -2}{(a_1-2)(a_2-2) - 1}. \]
We know that $a_2 + a_3 \leq l-1$. Since the sum of
the $a_i $ is $l+2$ we get $a_1 \geq 3$.

If $a_1 = 3$ then $\frac{a_2 + 1}{a_2 - 3}$ must be an integer. This gives
$(a_2, a_3)$ either $(4,7)$ or $(5,5)$ when $a_2 \leq a_3$ .
If $a_1 = 4$ then $\frac{a_2 + 2}{2a_2 - 5}$ is an integer which gives
$a_2 = 4$ and $a_3 = 4$ when the sequence is nondecreasing.
If $a_1 \geq 5$ and $a_2 \geq 5$, we get $a_3 < 5$ and do not
have a nondecreasing sequence.
\end{proof}

We know that in all three cases when $n=3$ there are algebras with these resolution
types. However the cases when $n=4$ and $n=5$ are open.

\medskip
{\noindent \bf Question.}
Is there an Artin-Schelter regular algebra $A$ with minimal resolution
\[A\leftarrow A(-1)^2\leftarrow A(-4)^3\oplus A(-5)\leftarrow
A(-5)\oplus A(-6)^3\leftarrow A(-9)^2\hookleftarrow A(-10)?\]
Is there one with an additional summand $A(-5)$ at steps two and three?

\end{document}